\theoremstyle{plain}
\newtheorem{lem}{Lemma}[section]
\newtheorem{cor}[lem]{Corollary}
\newtheorem{prop}[lem]{Proposition}
\newtheorem{thm}[lem]{Theorem}
\newtheorem{intthm}{Theorem}
\newtheorem*{mthm*}{Main Theorem}
\theoremstyle{definition}
\newtheorem{defn}[lem]{Definition}
\newtheorem{ex}[lem]{Example}
\newtheorem{disc}[lem]{Remark}
\newtheorem{fact}[lem]{Fact}
\newtheorem{para}[lem]{}
\newtheorem{convention}[lem]{Convention}
\newtheorem*{convention*}{Convention}
\newcommand{\A}{\mathcal{A}}
\newcommand{\pd}{\operatorname{pd}}
\newcommand{\id}{\operatorname{id}}
\newcommand{\depth}{\operatorname{depth}}
\newcommand{\edim}{\operatorname{edim}}
\newcommand{\HH}{\operatorname{H}}
\newcommand{\spec}{\operatorname{Spec}}
\newcommand{\Ker}{\operatorname{Ker}}
\newcommand{\ideal}[1]{\mathfrak{#1}}
\newcommand{\m}{\ideal{m}}
\newcommand{\p}{\ideal{p}}
\newcommand{\q}{\ideal{q}}
\newcommand{\fm}{\ideal{m}}
\newcommand{\xra}{\xrightarrow}
\newcommand{\xla}{\xleftarrow}
\newcommand{\res}{\xra{\simeq}}
\newcommand{\e}{\mathbf{e}}
\newcommand{\Ext}[4][R]{\operatorname{Ext}_{#1}^{#2}(#3,#4)}
\newcommand{\Hom}{\operatorname{Hom}}	
\newcommand{\Tor}[4][R]{\operatorname{Tor}^{#1}_{#2}(#3,#4)}
\def\Tor{\operatorname{Tor}}
\def\Ext{\operatorname{Ext}}
\def\m{\mathfrak{m}}
\def\p{\mathfrak{p}}
\def\mod{\operatorname{\mathsf{mod}}}
\def\syz{\mathrm{\Omega}}
\def\thick{\operatorname{\mathsf{thick}}}
\def\X{\mathcal{X}}
\def\Z{\mathcal{Z}}
\def\A{\mathcal{A}}
\def\T{\mathcal{T}}
\def\xx{\boldsymbol{x}}
\def\PD{\mathsf{PD}}
\def\sing{\operatorname{\mathsf{Sing}}}
\def\spec{\operatorname{\mathsf{Spec}}}
\def\speco{\operatorname{\mathsf{Spec^0}}}
\def\cm{\mathsf{CM}}
\def\nf{\mathsf{NF}}
\def\ipd{\mathsf{IPD}}
\def\rest{\operatorname{\mathsf{rest}}}
\def\ng{\operatorname{\mathsf{NonGor}}}
\def\k{\mathrm{K}}
\def\res{\operatorname{\mathsf{res}}}
\def\add{\operatorname{\mathsf{add}}}
\def\D{\operatorname{\mathsf{D}}}
\def\e{\mathrm{e}}
\def\edim{\operatorname{edim}}
\def\I{\mathrm{I_2}\!}
\def\sg{\operatorname{\mathsf{sg}}}
\def\b{\operatorname{\mathsf{b}}}
\def\Y{\mathcal{Y}}
\def\ssupp{\operatorname{\mathsf{ssupp}}}
\numberwithin{equation}{lem}
\begin{document}

\bibliographystyle{amsplain}

\title[Local rings with quasi-decomposable maximal ideal]{Local rings with quasi-decomposable\\ maximal ideal}

\author{Saeed Nasseh}

\address{Department of Mathematical Sciences,
Georgia Southern University,
Statesboro, Georgia 30460, U.S.A.}

\email{snasseh@georgiasouthern.edu}
\urladdr{https://cosm.georgiasouthern.edu/math/saeed.nasseh}

\author{Ryo Takahashi}
\address{Graduate School of Mathematics\\
Nagoya University\\
Furocho, Chikusaku, Nagoya, Aichi 464-8602, Japan}
\email{takahashi@math.nagoya-u.ac.jp}
\urladdr{http://www.math.nagoya-u.ac.jp/~takahashi/}

\thanks{Takahashi was partly supported by JSPS Grants-in-Aid
for Scientific Research 16K05098.}


\keywords{Cohen-Macaulay ring, derived category, direct summand, fiber
product, finite Cohen-Macaulay representation type, hypersurface, maximal Cohen-Macaulay module, minimal multiplicity, resolving subcategory, singularity category, syzygy, thick subcategory.}
\subjclass[2010]{13C60, 13D02, 13D09, 13H10.}

\begin{abstract}
Let $(R,\m)$ be a commutative noetherian local ring.
In this paper, we prove that if $\m$ is decomposable, then for any finitely generated $R$-module $M$ of infinite projective dimension $\m$ is a direct summand of (a direct sum of) syzygies of $M$.
Applying this result to the case where $\m$ is quasi-decomposable, we obtain several classifications of subcategories, including a complete classification of the thick subcategories of the singularity category of $R$.
\end{abstract}

\maketitle


\section{Introduction}\label{sec161010a}
Let $R$ be a commutative noetherian local ring with maximal ideal $\m$.
First, we investigate the structure of syzygies of finitely generated $R$-modules in the case where $\m$ is decomposable as an $R$-module.
Our main result in this direction is Theorem \ref{1}, which includes the following remarkable statement.

\begin{intthm}\label{cor}
Let $R$ be a commutative noetherian local ring with decomposable maximal ideal $\m$.
Let $M$ be a finitely generated $R$-module with infinite projective dimension. Then $\m$ is a direct summand of $\syz^3M\oplus\syz^4M\oplus\syz^5M$.
\end{intthm}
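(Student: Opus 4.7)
The plan is to exploit the decomposition $\m = I \oplus J$ with $I, J$ nonzero ideals of $R$, which immediately forces $IJ \subseteq I \cap J = 0$; thus $I$ and $J$ mutually annihilate. This is the crucial algebraic input. My overall strategy is in two stages: first show that the residue field $k = R/\m$ is a direct summand of $\syz^i M$ for some $i \in \{2, 3, 4\}$, and then apply the syzygy functor once to deduce that $\m = \syz k$ is a direct summand of $\syz^{i+1} M$ with $i+1 \in \{3, 4, 5\}$, hence of $\syz^3 M \oplus \syz^4 M \oplus \syz^5 M$.

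For the first stage, I would fix a minimal free resolution $\cdots \to F_n \xrightarrow{\partial_n} F_{n-1} \to \cdots \to F_0 \to M \to 0$ and split each differential entrywise as $\partial_n = \partial_n^I + \partial_n^J$ with matrix entries in $I$ and $J$ respectively. The identity $IJ = 0$ forces $\partial_n^I \partial_{n+1}^J = 0 = \partial_n^J \partial_{n+1}^I$, so the $I$- and $J$-parts of the resolution interact in a very constrained way. Since $\pd M = \infty$, no $\syz^n M$ is free, and the relations produced by $IJ = 0$ force the existence of elements $c \in \syz^n M$ for small $n$ with $\m c = 0$; every such nonzero $c$ yields a cyclic submodule $Rc \cong k$. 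A careful adjustment of bases, arranged so that the corresponding free generator of $F_n$ can be peeled off from the rest, would then promote this submodule copy of $k$ to a genuine direct summand.

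The main obstacle is this last promotion step: it is relatively easy to locate submodule copies of $k$ inside the syzygies, but upgrading them to direct summands requires producing an honest retraction. This is a delicate bookkeeping argument in which one chooses the free covers so that a rank-one piece of $F_n$ whose boundary and whose role as a syzygy generator can both be controlled via $IJ = 0$ can be isolated as a direct summand. The specific range $\{2, 3, 4\}$ (equivalently $\{3, 4, 5\}$ after the final syzygy shift) arises because, depending on whether the minimal generators of $\syz^n M$ have image principally in $I F_{n-1}$, in $J F_{n-1}$, or in a nontrivial mixture, one may need to descend through up to three consecutive syzygies before the splitting becomes available. Finally, since in a minimal free resolution the syzygy of a direct summand is itself a direct summand, the $k$-summand of $\syz^i M$ yields an $\m$-summand of $\syz^{i+1} M$, completing the proof.
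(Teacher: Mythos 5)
Your strategy cannot work as stated, because it would establish a strictly stronger statement that is false. You propose to find $k$ as a direct summand of a \emph{single} syzygy $\syz^iM$ with $i\in\{2,3,4\}$ and then apply $\syz$ once to conclude that $\m$ is a direct summand of a single $\syz^{i+1}M$. Consider, however, $R=k[[X,Y]]/(XY)$ with $I=(x)$, $J=(y)$ and $M=R/I$ (this is Example \ref{11} of the paper). Here $\syz^iM\cong R/J$ for odd $i$ and $\syz^iM\cong R/I$ for even $i$; these modules are discrete valuation rings, so they have positive depth, contain no socle elements at all (contradicting your intermediate claim that $IJ=0$ forces elements $c\in\syz^nM$ with $\m c=0$), and certainly contain no direct summand isomorphic to $k$; moreover no single syzygy can contain $\m$, which needs two generators, as a direct summand of a cyclic module. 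Nevertheless the theorem is true for this $M$ because $\syz^3M\oplus\syz^4M\cong R/J\oplus R/I\cong I\oplus J=\m$: the two pieces $I$ and $J$ occur as summands of \emph{different} syzygies. Any correct proof must therefore allow $\m$ to be split across several syzygies, which your final ``shift by one syzygy'' step structurally rules out.

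Beyond this, the ``promotion'' of a submodule copy of $k$ to a direct summand is exactly where all the difficulty would lie, and your sketch offers no mechanism for producing the required retraction. The paper's argument runs on two inputs absent from your proposal: the theorem of Dress and Kr\"amer that over a ring with $\m=I\oplus J$ every syzygy $\syz^nM$ with $n\ge2$ decomposes as $A\oplus B$ with $A$ an $R/I$-module and $B$ an $R/J$-module, and Lemma \ref{6}, which gives $\syz_RN\cong I^{\oplus n}\oplus\syz_{R/I}N$ for an $R/I$-module $N$ minimally generated by $n$ elements. Combining these and analyzing which of the $R/I$- and $R/J$-components of $\syz^2M$, $\syz^3M$, $\syz^4M$ vanish is what produces $I$ and $J$ (hence $\m$) as summands inside the window $\syz^3M\oplus\syz^4M\oplus\syz^5M$, including the delicate cases (handled via Lemma \ref{14}, when $R/I$ or $R/J$ is a discrete valuation ring) in which the two halves of $\m$ land in different syzygies. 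If you want to salvage your approach, you would need to replace the goal ``$k$ is a summand of one syzygy'' by the weaker, correct goal that $I$ and $J$ are each summands of some syzygy in the stated range, and for that you essentially need the Dress--Kr\"amer decomposition anyway.
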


\noindent
Here, $\syz^n(-)$ stands for the $n$-th syzygy in the minimal free resolution.
This result (Theorem \ref{1} strictly) recovers and refines main theorems of the first author and Sather-Wagstaff \cite{NSW} on the vanishing of Tor and Ext modules over fiber products.

Next, we apply the above theorem to classification problems of subcategories over a Cohen-Macaulay local ring with quasi-decomposable maximal ideal.
Here, we say that an ideal $I$ of $R$ is {\em quasi-decomposable} if there exists an $R$-regular sequence $\xx=x_1,\dots,x_n$ in $I$ such that $I/(\xx)$ is decomposable. Examples of rings with quasi-decomposable maximal ideal include the 2-dimensional non-Gorenstein
normal local domains with a rational singularity (see Example~\ref{ex20170916}); more examples of such rings are given in Section~\ref{a}.
Our main result in this direction is Theorem \ref{31}, which especially yields a complete classification of the thick subcategories of the singularity category $\D_{\sg}(R)$ of such $R$:

\begin{intthm}\label{Thm B}
Let $R$ be a Cohen-Macaulay singular local ring whose maximal ideal is quasi-decomposable.
Suppose that on the punctured spectrum $R$ is either locally a hypersurface or locally has minimal multiplicity.
Then taking the singular supports induces a one-to-one correspondence between the thick subcategories of $\D_{\sg}(R)$ and the specialization-closed subsets of $\sing R$.
\end{intthm}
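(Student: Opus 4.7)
The plan is to reduce to the case of a decomposable maximal ideal and then combine Theorem \ref{cor} with the known local classifications of thick subcategories on the punctured spectrum. Quasi-decomposability gives a regular sequence $\xx \subseteq \m$ with $\m/(\xx)$ decomposable; using that $\sing R$ and $\sing(R/(\xx))$ are in canonical bijection via $\p \leftrightarrow \p/(\xx)$ and that killing a regular sequence transfers thick subcategory classifications and support data compatibly between $\D_{\sg}(R)$ and $\D_{\sg}(R/(\xx))$, one may assume $\m = \m_1 \oplus \m_2$ is decomposable at the outset. The punctured-spectrum hypotheses transport across this reduction, since hypersurface localizations descend to complete-intersection localizations (which admit the same style of thick-subcategory classification via Stevenson's theorem) and minimal multiplicity is preserved under regular-sequence quotients.

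For each non-maximal $\p \in \sing R$, the localization $R_\p$ satisfies one of the two hypotheses, and in either case the thick subcategories of $\D_{\sg}(R_\p)$ are known to be parametrized by specialization-closed subsets of $\sing R_\p$. A Stevenson-type local-to-global principle then reduces the theorem to controlling the closed-point contribution, i.e.\ to showing that every nonzero thick subcategory $\T \subseteq \D_{\sg}(R)$ whose singular support contains $\m$ in fact contains the residue field $k$.

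This closed-point step is where Theorem \ref{cor} is decisive. Any non-zero $M \in \T$ is represented by a finitely generated module of infinite projective dimension, so Theorem \ref{cor} produces $\m$ as a direct summand of $\syz^3 M \oplus \syz^4 M \oplus \syz^5 M$; since syzygies become shifts in $\D_{\sg}(R)$ and $\T$ is thick, $\m \in \T$. The decomposition $\m = \m_1 \oplus \m_2$ and thickness then give $\m_i \in \T$; the triangle arising from $0 \to \m_i \to R \to R/\m_i \to 0$ (with $R \cong 0$ in $\D_{\sg}(R)$) places $R/\m_i$ in $\T$; and the Chinese Remainder Theorem sequence $0 \to R \to R/\m_1 \oplus R/\m_2 \to k \to 0$ finally identifies $k \cong R/\m_1 \oplus R/\m_2$ in $\D_{\sg}(R)$, so $k \in \T$, completing the closed-point analysis.

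The main obstacle I anticipate is the reduction-by-regular-sequence step: one must simultaneously transport thick subcategories, singular supports, and the punctured-spectrum hypothesis from $R$ to $R/(\xx)$, which is delicate since killing regular elements changes the specific shape of each localization (for example, from hypersurface to higher-codimension complete intersection). A secondary technical point is that the local-to-global gluing must identify the subcategory of objects supported only at $\m$ with $\thick_{\D_{\sg}(R)}(k)$; the Cohen-Macaulay hypothesis together with the $k \in \T$ conclusion above are what make this identification possible.
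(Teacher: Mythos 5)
Your overall architecture---reduce to the decomposable case by killing the regular sequence $\xx$, then use Theorem \ref{cor} for the closed point and a local-to-global principle for the rest---has two genuine gaps, and the first is fatal as stated. Passing from $R$ to $R/(\xx)$ does \emph{not} transport the problem: by Lemma \ref{35} the length of $\xx$ is $\depth R$ or $\depth R-1$, so $R/(\xx)$ has dimension at most $1$ and $\sing(R/(\xx))$ is essentially a single point, whereas $\sing R$ can be large (the theorem explicitly covers non-isolated singularities, e.g.\ the rings of \cite[Examples 7.1--7.5]{crs}). There is no canonical bijection $\p\leftrightarrow\p/(\xx)$ between the singular loci, and there is no equivalence or compatible transfer of thick-subcategory lattices between $\D_{\sg}(R)$ and $\D_{\sg}(R/(\xx))$; killing a regular sequence changes the singularity category drastically. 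The paper never reduces at the categorical level: the decomposable case (Theorem \ref{cor}) is used only through a syzygy-level statement, Lemma \ref{34}, which shows that any resolving subcategory of $\mod R$ inside $\cm(R)$ containing a non-free module must contain $\syz_R^dk$, and the transfer between $R$ and $R/(\xx)$ there is done module-by-module via Lemmas \ref{ooor} and \ref{res}, not via a comparison of singularity categories. Your closed-point argument (syzygies are shifts in $\D_{\sg}$, so $\m\in\T$, then $R/\m_i\in\T$, then $k\in\T$ via the fiber-product sequence) is fine \emph{when $\m$ itself is decomposable}, but without a valid reduction it does not apply to $R$.

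The second gap is the ``Stevenson-type local-to-global principle.'' No such principle is available in the generality of this theorem (Cohen-Macaulay, locally hypersurface \emph{or} locally minimal multiplicity on the punctured spectrum); Stevenson's machinery needs a ring action by cohomology operators, i.e.\ (locally) complete intersection hypotheses. The paper's actual route is different: it proves Theorem \ref{31}, a chain of bijections in which the punctured-spectrum hypotheses enter through Takahashi's classifications of resolving subcategories by non-free loci (\cite[Theorem 5.13]{stcm} for the hypersurface case, \cite[Theorem 5.6]{crs} for minimal multiplicity), with Lemma \ref{34} supplying exactly the closed-point input you were after; it then moves from $\mod R$ to $\D^{\b}(R)$ by Krause--Stevenson \cite[Theorem 1]{KS} and from $\D^{\b}(R)$ to $\D_{\sg}(R)$ by an elementary argument, and finally (in \ref{para22022017a}) identifies the resulting bijection with singular support by checking $\ipd(\rest_{\mod R}(\pi^{-1}\X))=\ssupp\X$ and $\pi(\thick_{\D^{\b}(R)}(\ipd^{-1}W))=\ssupp^{-1}W$. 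To repair your proposal you would need to replace both the categorical reduction and the local-to-global step with arguments of this kind.
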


\noindent
Here, $\sing R$ stands for the singular locus of $R$.
The singularity category $\D_{\sg}(R)$, which is also called the stable derived category of $R$, is a triangulated category that has been introduced by Buchweitz \cite{B} and Orlov \cite{O}, and studied deeply and widely so far; see \cite{sg} and references therein.
The {\em singular support} of an object $C$ of $\D_{\sg}(R)$ is defined as the set of prime ideals $\p$ such that $C_\p$ is non-zero in $\D_{\sg}(R_\p)$.
The inverse map of the bijection in the above theorem can also be given explicitly.

There are indeed a lot of examples satisfying the assumptions of Theorems \ref{cor} and \ref{Thm B} (more precisely, Theorems \ref{1} and \ref{31}), and we shall present some of them.

The organization of this paper is as follows.
Section \ref{sec170215a} is devoted to notation, definitions and some basic properties which are used in later sections.
In Section \ref{sec161010b}, we prove a structure result of syzygies of modules over a local ring with decomposable maximal ideal, which gives rise to Theorem \ref{cor} as an immediate corollary.
In Section \ref{a}, for a Cohen-Macaulay local ring with quasi-decomposable maximal ideal, we classify resolving/thick subcategories of module/derived/singularity categories, including Theorem \ref{Thm B}.
Sections \ref{b} and \ref{c} state several other applications of our main results, including the main theorems of \cite{NSW}.

\section{Background and conventions}\label{sec170215a}

This section contains the terminology and some of the definitions and their basic properties that will tacitly be used in this paper. For more details see~\cite{stcm, crs, thd}.

\begin{para}
Throughout this paper, $R$ is a commutative noetherian local ring with maximal ideal $\fm$ and residue field $k$, and all modules are finitely generated.
\end{para}

\begin{para}
Let $M$ be an $R$-module.
All syzygies of $M$ are calculated by using the minimal free resolution of $M$, and the $i$-th syzygy of $M$ is denoted by $\syz_R^i M$.
The minimal number of generators for $M$ is denoted by $\nu_R(M)$.
For an integer $i\ge0$ the $i$th Betti number of $M$ is denoted by $\beta_i^R(M)$; note by definition that $\beta_i^R(M)=\nu_R(\syz_R^iM)$.
\end{para}

\begin{para}
We denote by $\sing(R)$ the \emph{singular locus} of $R$, namely, the set of prime ideals $\p$ of $R$ for which the local ring $R_{\p}$ is singular (i.e. non-regular).
Also, $\ng(R)$ stands for the \emph{non-Gorenstein locus} of $R$, that is, the set of prime ideals $\p$ of $R$ such that $R_\p$ is not Gorenstein.
\end{para}

\begin{para}
We denote the \emph{punctured spectrum} $\spec R\setminus\{\m\}$ of $R$ by $\speco R$.
Recall that $R$ is said to have an \emph{isolated singularity} if $R_\p$ is a regular local ring for all $\p\in \speco R$, in other words, if $\sing R\subseteq\{\m\}$.
\end{para}

\begin{para}
A subset $S$ of $\sing R$ is called {\em specialization-closed}
provided that for prime ideals $\p\subseteq\q$ of $R$ if $\p$ is in $S$, then
so is $\q$.
\end{para}

\begin{para}
Throughout this paper, all subcategories are full and closed under isomorphism.
We denote by $\mod R$ the category of (finitely generated) $R$-modules and by $\PD(R)$ the subcategory of $\mod R$ consisting of modules of finite projective dimension. Also $\cm(R)$ is the subcategory of $\mod R$ consisting of maximal Cohen-Macaulay $R$-modules. By $\add R$ we denote the subcategory of $\mod R$ consisting of direct summands
of finite direct sums of copies of $R$.
\end{para}

\begin{para}
We denote by $\D^{\b}(R)$ the bounded derived category of $\mod R$, and by $\D_{\sg}(R)$ the \emph{singularity category} of $R$, that is, the Verdier
quotient of $\D^{\b}(R)$ by perfect complexes. We denote by $\pi\colon \D^{\b}(R) \to \D_{\sg}(R)$ the canonical functor.

For a subcategory $\X$ of $\D^{\b}(R)$ we denote by $\pi \X$ the subcategory of
$\D_{\sg}(R)$ consisting of objects $M$ such that $M\cong\pi X$ for some $X\in\X$.
For a subcategory $\Y$ of $\D_{\sg}(R)$ we denote by $\pi^{-1}\Y$ the
subcategory of $\D^{\b}(R)$ consisting of objects $N$ such that $\pi N$ is in $\Y$.
\end{para}

\begin{para}
For an object $M\in \D_{\sg}(R)$, the {\em singular support} $\ssupp M$
of $M$ is defined as the set of prime ideals $\p$ of $R$ such that
$M_{\p}\not\cong 0$ in $\D_{\sg}(R_{\p})$.

For a subcategory $\X$ of $\D_{\sg}(R)$, the {\em singular support} $\ssupp \X$ of $\X$ is defined by $\ssupp \X:=\bigcup_{X\in \X}\ssupp X$.

For a subset $S$ of $\spec R$, we denote by $\ssupp^{-1}S$ the subcategory
of $\D_{\sg}(R)$ consisting of objects whose singular supports are
contained in $S$.
\end{para}

\begin{para}
For an $R$-module $M$, the {\em non-free locus} $\nf(M)$ (resp. {\em infinite projective dimension locus} $\ipd(M)$) of $M$ is defined as the set of prime ideals $\p$ of $R$ such that $M_\p$ is non-free (resp. of infinite projective dimension) over $R_\p$.

For a subcategory $\X$ of $\mod R$, the {\em non-free locus} $\nf(\X)$ (resp. {\em infinite projective dimension locus} $\ipd(\X)$) of $\X$ is defined by $\nf(\X):=\bigcup_{X\in\X}\nf(X)$ (resp. $\ipd(\X):=\bigcup_{X\in\X}\ipd(X)$).

For a subset $S$ of $\spec R$, we denote by $\nf^{-1}_\cm(S)$ (resp. $\ipd^{-1}(S)$) the subcategory of $\cm(R)$ (resp. $\mod R$) consisting of modules whose non-free (resp. infinite projective dimension) loci are contained in $S$.
\end{para}

\begin{para}
There are three types of restriction maps $\rest$ that we will consider in this paper. We clarify each one for the convenience of the reader.
\begin{itemize}
\item
For a subcategory $\X$ of $\mod R$, we set $\rest_{\cm(R)}(\X):= \X \cap \cm(R)$.
\item
For a subcategory $\X$ of $\D^{\b}(R)$, we set $\rest_{\mod R}(\X):= \X \cap \mod R$.
\item
For a subcategory $\X$ of $\D_{\sg}(R)$, we set $\rest_{\cm(R)}(\X):=
\pi^{-1}\X\cap \cm(R)$.
\end{itemize}
\end{para}

\begin{para}
Let $\A$ be an abelian category with enough projective objects.
We say that a subcategory $\X$ of $\A$ is \emph{resolving} if it contains the projective objects and is closed under direct
summands, extensions, and kernels of epimorphisms.
Note that a resolving subcategory of $\mod R$ is nothing but a subcategory containing $R$ and closed under direct summands, extensions and syzygies.

For an object $M\in\A$, we denote by $\res M$ the smallest resolving subcategory of $\A$ that contains $M$. This subcategory is called the {\em resolving closure} of $M$.
\end{para}

\begin{para}
Let $\A$ (resp. $\T$) be an abelian category (resp. a triangulated category).
A subcategory $\X$ of $\A$ (resp. $\T$) is called {\em thick} provided
that $\X$ is closed under direct summands, and for any exact sequence
$0\to L\to M\to N\to 0$ in $\A$ (resp. exact triangle $L\to M\to
N\rightsquigarrow$ in $\T$), if two of $L,M,N$ are in $\X$, then so is
the third.
The {\em thick closure} of a subcategory $\X$ of $\A$ (resp. $\T$),
denoted by $\thick_\A\X$ (resp. $\thick_\T\X$), is by definition the
smallest thick subcategory of $\A$ (resp. $\T$) containing $\X$.
Note that $\thick_{\mod R}R=\PD(R)$, and that $\thick_{\D^{\b}(R)}R$
consists of the {\em perfect complexes}, that is, bounded complexes of
free $R$-modules (of finite rank).
For a subcategory $\X$ of $\D^{\b}(R)$, we simply write
$\thick_{\D_{\sg}(R)}\X$ for $\thick_{\D_{\sg}(R)}(\pi\X)$.
\end{para}

\begin{para}
We say that $R$ is a {\em hypersurface} if its $\m$-adic completion $\widehat R$ is isomorphic to the quotient of a regular local ring by an element.
(Hence, by definition, any regular local ring is a hypersurface.)
\end{para}

\begin{para}
Let $R$ be a Cohen-Macaulay local ring.
Then $R$ satisfies the inequality
\begin{equation}\label{eedim}
\e(R)\ge \edim R-\dim R+1,
\end{equation}
where $\e(R)$ and $\edim R$ stand for the multiplicity of $R$ and the embedding dimension of $R$, respectively.
We say that $R$ has {\em minimal multiplicity} (or {\em maximal embedding dimension}) if the equality of \eqref{eedim} holds.
If the residue field $k$ of $R$ is infinite, then this is equivalent to the existence of a system of parameters $\xx=x_1,\dots,x_d$ of $R$ such that $\m^2=\xx\m$; see \cite[Exercise 4.6.14]{BH}.
\end{para}

\begin{para}
Let $R$ be a Cohen-Macaulay local ring.
We say that $\speco R$ is a {\em hypersurface} (resp. has {\em minimal multiplicity}, has {\em finite Cohen-Macaulay representation type}) if the Cohen-Macaulay local ring $R_\p$ is a hypersurface (resp. has minimal multiplicity, has finite Cohen-Macaulay representation type) for every $\p\in\speco R$.
A Cohen-Macaulay local ring with an isolated singularity is a typical example satisfying all of these three conditions, so they are mild conditions.
\end{para}

\section{Local rings with decomposable maximal ideal}\label{sec161010b}

This section is devoted to the proof of {Theorem~\ref{cor}. This theorem is a consequence of Theorem~\ref{1}, which is the main result of this section.

The following fact was established by Ogoma~\cite[Lemma 3.1]{ogoma} and gives a characterization of local rings with decomposable maximal ideal in terms of fiber products.

\begin{fact}\label{fact300117a}
The maximal ideal $\m$ has a direct sum decomposition $\m=I\oplus J$ in which $I,J$ are non-zero ideals of $R$ if and only if $R$ is isomorphic to the non-trivial fiber product $\left(R/I\right)\times_k\left(R/J\right)$, that is, $R$ is isomorphic to the pull-back of the natural surjections $R/I\xra{\pi_I}k\xla{\pi_J}R/J$. This isomorphism is naturally defined by
$r\mapsto (r+I,r+J)$ for $r\in R$. In this case, by~\cite[Remark 3.1]{christensen:gmirlr} we have
the equality
\begin{equation}\label{eq310117b}
\depth R=\min\{\depth R/I,\,\depth R/J,\,1\}.
\end{equation}
In particular, if $\m$ is decomposable and $M$ is an $R$-module, then it follows from the Auslander-Buchsbaum formula that
$\pd_RM\ge2$ if and only if $\pd_R M=\infty$.
\end{fact}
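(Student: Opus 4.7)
The statement splits cleanly into three assertions: Ogoma's iff characterization, a depth formula from \cite{christensen:gmirlr}, and the ``In particular'' consequence. My plan is to verify the first by a direct computation, cite the second, and deduce the third from Auslander--Buchsbaum.

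For the iff, I would exhibit the natural ring homomorphism $\varphi\colon R\to (R/I)\times_k(R/J)$ sending $r$ to $(r+I,r+J)$. This lands in the pullback because both coordinates reduce to $r+\m$ in $k$. Assume $\m=I\oplus J$: injectivity reduces to $I\cap J=0$, which is automatic from the direct sum decomposition; for surjectivity, any $(a+I,b+J)$ in the fiber product satisfies $a-b\in\m=I+J$, so writing $a-b=i+j$ with $i\in I$, $j\in J$ and setting $r:=a-i=b+j$ yields $\varphi(r)=(a+I,b+J)$. Conversely, if $\varphi$ is an isomorphism then $\ker\varphi=I\cap J=0$; and for any $c\in\m$, surjectivity applied to $(c+I,0+J)$ produces $r$ with $r-c\in I$ and $r\in J$, whence $c\in I+J$. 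Thus $\m=I\oplus J$.

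For the depth formula, the fiber product presentation yields a short exact sequence
\[
0\to R\xra{\varphi} R/I\oplus R/J\to k\to 0,
\]
where the right-hand map is $(a+I,b+J)\mapsto (a-b)+\m$. The depth lemma applied to this sequence gives the lower bound $\depth R\geq\min\{\depth R/I,\,\depth R/J,\,1\}$ (using $\depth k=0$). The matching upper bound is the substantive content and is exactly \cite[Remark 3.1]{christensen:gmirlr}, which I would invoke directly rather than re-derive; a sanity check is that any nonzero $i\in I$ satisfies $Ji\subseteq I\cap J=0$, so $J$ lies in $\ann_R(i)$, which prevents $\depth R$ from exceeding $1$.

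For the ``In particular'' part: since $I,J$ are both nonzero, the depth formula forces $\depth R\leq 1$. For any finitely generated $M$ with $\pd_RM<\infty$, Auslander--Buchsbaum gives $\pd_RM=\depth R-\depth M\leq 1$, so $\pd_RM\in\{0,1\}$; equivalently, $\pd_RM\geq 2$ implies $\pd_RM=\infty$. The only nontrivial ingredient in the entire statement is the upper bound in the depth formula, and since that is already attributed to the cited reference, the remaining work is just unwinding the fiber product presentation and one application of Auslander--Buchsbaum.
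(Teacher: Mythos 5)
Your proposal is correct and matches the paper's treatment: the paper states this as a Fact, citing Ogoma for the fiber-product characterization and \cite[Remark 3.1]{christensen:gmirlr} for the depth equality, and your direct verification of the natural map $r\mapsto(r+I,r+J)$ being an isomorphism, together with the Auslander--Buchsbaum deduction, simply fills in those routine steps while invoking the same reference for the substantive depth statement. One small caveat: your ``sanity check'' (a nonzero $i\in I$ with $J\subseteq\ann_R(i)$) does not by itself force $\depth R\le 1$, since nonzero elements with nonzero annihilators exist in rings of arbitrary depth, but this is harmless because you rely on the cited remark for that bound.
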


\begin{lem}\label{6}
Suppose that $\m$ has a direct sum decomposition $\m=I\oplus J$.
Let $N$ be an $R/I$-module, and let $n:=\nu_{R/I}(N)=\nu_R(N)$.
Then there is an isomorphism $\syz_RN\cong I^{\oplus n}\oplus\syz_{R/I}N$ of $R$-modules.
\end{lem}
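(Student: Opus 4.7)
The plan is to exploit the fiber-product description of $R$ given in Fact~\ref{fact300117a}: writing $A = R/I$ and $B = R/J$, we have $R \cong A \times_k B$, realized concretely as the subring $\{(a,b)\in A\times B : \overline{a} = \overline{b}\}$ of $A\times B$, where $\overline{(\cdot)}$ denotes residue in $k$. Under this identification, $I$ corresponds to $\{0\}\times \m_B$ and $J$ to $\m_A\times \{0\}$, and the $R$-module structure on any $A$-module is obtained through the first projection $R \twoheadrightarrow A$.

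First I would pick a minimal $A$-generating set $n_1,\dots,n_n$ of $N$; by hypothesis this is also a minimal $R$-generating set, giving a minimal $R$-free presentation
$$0 \to \syz_R N \to R^{n} \xra{\phi} N \to 0.$$
Since $I\cdot N = 0$, the map $\phi$ factors through the natural projection $R^n \twoheadrightarrow A^n$ followed by the minimal $A$-free presentation $A^n \to N$ whose kernel is $\syz_A N$. The snake lemma applied to this factorization yields a short exact sequence of $R$-modules
$$0 \to I^{n} \to \syz_R N \to \syz_A N \to 0.$$

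The heart of the argument is to split this sequence. Using the fiber-product picture, I would describe $\syz_R N$ concretely as
$$\syz_R N = \{(a,b)\in R^n \subseteq A^n\times B^n : a \in \syz_A N\},$$
and define a candidate section $\sigma\colon \syz_A N \to \syz_R N$ by $\sigma(a) = (a,0)$. The pair $(a,0)$ actually lies in the fiber product $R^n$ because $\syz_A N \subseteq \m_A A^n$, so its residue in $k^n$ is zero; the map $\sigma$ is $R$-linear because $R$ acts on the $A$-coordinate through its quotient $A$; and its image meets $I^n = \{(0,b) : b \in \m_B^n\}$ only at the origin. This produces the desired splitting $\syz_R N \cong I^{\oplus n} \oplus \syz_A N$ of $R$-modules.

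The only delicate point is verifying that the naive formula $\sigma(a)=(a,0)$ really takes values in the fiber product $R^n$, which hinges on the small but crucial fact that first syzygies lie inside the maximal ideal times the free cover. Once that is noted, everything else is a formal consequence of the fiber-product structure and the factorization of $\phi$ through $A^n$.
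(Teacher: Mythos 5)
Your argument is correct and is essentially the paper's proof rewritten in fiber-product coordinates: both split the sequence $0\to I^{\oplus n}\to\syz_RN\to\syz_{R/I}N\to0$ by the same mechanism, namely that minimality forces $\syz_{R/I}N\subseteq\m(R/I)^{\oplus n}=(\m/I)^{\oplus n}\cong J^{\oplus n}$, which lifts canonically into $R^{\oplus n}$ and lands in $\syz_RN$ (your section $\sigma(a)=(a,0)$ is exactly this lift). The paper carries this out with a commutative diagram inside $R$ itself rather than through the explicit description $R\cong(R/I)\times_k(R/J)$, so the route is the same in substance.
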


\begin{proof}
There is a commutative diagram
$$
\xymatrix{
& 0\ar[d] & 0\ar[d] \\
& I^{\oplus n}\ar@{=}[r]\ar[d] & I^{\oplus n}\ar[d] \\
0\ar[r] & \syz_RN\ar[r]^-\lambda\ar[d]^\zeta & R^{\oplus n}\ar[r]^-\rho\ar[d]^\varepsilon & N\ar[r]\ar@{=}[d] & 0 \\
0\ar[r] & \syz_{R/I}N \ar[r]^-\theta\ar[d] & (R/I)^{\oplus n}\ar[r]^-\varpi\ar[d] & N\ar[r] & 0 \\
& 0 & 0
}
$$
with exact rows and columns.
Now we have
$$
\syz_{R/I}N=\Ker\varpi\subseteq\m (R/I)^{\oplus n}=(\m/I)^{\oplus n}\cong J^{\oplus n}\subseteq R^{\oplus n}.
$$
Call this composite injection $\eta\colon \syz_{R/I}N\to R^{\oplus n}$.
Then we have $\theta=\varepsilon\eta$, and $\rho\eta=\varpi\varepsilon\eta=\varpi\theta=0$.
Hence, there is a homomorphism $\xi\colon \syz_{R/I}N\to\syz_RN$ such that $\eta=\lambda\xi$.
Since $\theta\zeta\xi=\varepsilon\lambda\xi=\varepsilon\eta=\theta$ and $\theta$ is injective, we get $\zeta\xi=1$.
Therefore, the left column in the above diagram splits and we get $\syz_RN\cong I^{\oplus n}\oplus\syz_{R/I}N$.
\end{proof}

\begin{lem}\label{14}
Suppose that $\m$ has a direct sum decomposition $\m=I\oplus J$ such that $J\ne0$. Then the following hold.
\begin{enumerate}[\rm(1)]
\item
If $R/I$ is a discrete valuation ring, then there is an isomorphism $R/I\cong J$ of $R$-modules.
\item
Conversely, if $J$ is a free $R/I$-module, then $R/I$ is a discrete valuation ring.
\end{enumerate}
\end{lem}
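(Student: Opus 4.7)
The starting observation will be that $J\cong\m/I$ as $R$-modules, in fact as $R/I$-modules: the projection $\m\onto\m/I$ restricts to an isomorphism on the summand $J$ because $\m=I\oplus J$, and $J$ carries an $R/I$-module structure since $IJ\subseteq I\cap J=0$. Both parts of the lemma will then be read off from the maximal ideal $\m/I$ of the local ring $R/I$ in light of this isomorphism.

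For part (1), the plan is direct. Since $R/I$ is a DVR, its maximal ideal is principal, say $\m/I=(t)$, with $t$ a non-zero-divisor (as $R/I$ is a domain). The $R/I$-linear map $R/I\to\m/I$ sending $1\mapsto t$ is then an isomorphism, and composing with the isomorphism $\m/I\cong J$ yields the required isomorphism $R/I\cong J$ of $R$-modules.

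For part (2), the plan is two-step. First, I would show that any free decomposition $\m/I\cong(R/I)^n$ must have $n=1$, arguing by contradiction: if $t_1,t_2$ were two distinct basis elements, then expanding $t_1t_2\in\m/I$ in the basis in two ways using commutativity (from $t_2\cdot t_1$ one reads off coefficient $t_2$ on $t_1$ and zeros elsewhere, while from $t_1\cdot t_2$ one reads off $t_1$ on $t_2$ and zeros elsewhere) would, by uniqueness of free coordinates, force $t_1=t_2=0$, a contradiction. Second, once $\m/I$ is principal, generated by a non-zero-divisor $t$, Krull intersection lets each nonzero element of $R/I$ be written uniquely as $t^n\cdot u$ with $u$ a unit, so $R/I$ is a noetherian local domain of Krull dimension one with principal maximal ideal, hence a DVR.

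I do not foresee a serious obstacle; the least routine point is the commutativity-plus-uniqueness argument bounding the rank in (2), but it is a short calculation once the setup is in place.
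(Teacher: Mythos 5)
Your argument is correct, and part (1) is essentially the paper's proof: both rest on the observation $J\cong\m/I$ and on the fact that the maximal ideal of a discrete valuation ring is free of rank one over it. For part (2) you take a genuinely different and more elementary route. The paper argues homologically: since $\m/I$ is a free $R/I$-module, the residue field has projective dimension at most one over $R/I$, so $R/I$ has global dimension at most one and is hence regular of dimension at most one; the field case is ruled out because $J\ne0$, leaving a discrete valuation ring. You instead invoke the classical fact that an ideal of a commutative ring which is free as a module has rank at most one (your $t_1t_2$ coordinate comparison is exactly that argument, and the rank is exactly one since $\m/I\cong J\ne0$, basis elements being nonzero), and then verify by hand, via Krull's intersection theorem, that a noetherian local ring whose maximal ideal is generated by a non-zero-divisor is a discrete valuation ring. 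Your route avoids the regularity/global-dimension machinery at the price of a slightly longer direct verification; the paper's route is shorter but presupposes the characterization of regular local rings by finite global dimension. Both yield complete proofs of the statement.
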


\begin{proof}
Note that $\m/I\cong J$ as $R$-modules.

(1) By assumption we have $\m/I\cong R/I$. Hence, the assertion follows.

(2) By assumption, $\m/I$ is a free $R/I$-module.
Hence, the global dimension of $R/I$ is at most one.
If it is zero, then $R/I$ is a field and we have $I=\m$, which contradicts $J$ being non-zero.
Thus, $R/I$ is a discrete valuation ring.
\end{proof}

The following lemma is shown by a similar argument as in \cite[Lemma 2.5]{NSW}.

\begin{lem}\label{lem020117a}
Let $I$ be a
proper ideal of $R$. Let $M$ be an $R$-module with $\pd_RM\ge2$. If $\syz_R^2M$ is $R/I$-free, then $R/I$ has positive depth.
\end{lem}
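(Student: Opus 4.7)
My plan is a proof by contradiction, modeled on \cite[Lemma~2.5]{NSW}. Suppose $\depth R/I=0$, so $\m\in\ass_R(R/I)$. Since $\pd_R M\ge 2$ forces $\syz_R^2 M\ne 0$, the $R/I$-freeness of $\syz_R^2 M$ gives $\ass_R(\syz_R^2 M)=\ass_R(R/I)\ni\m$, and hence there is a nonzero $s\in\syz_R^2 M$ killed by $\m$. Via the inclusion $\syz_R^2 M=\ker d_1\hookrightarrow F_1$ from the minimal free resolution $\cdots\to F_2\xrightarrow{d_2}F_1\xrightarrow{d_1}F_0\to M\to 0$ of $M$, I view $s$ as an element of $F_1$.

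The next step extracts two complementary structural facts about $s$. Minimality forces $\syz_R^2 M\subseteq\m F_1$, so in a chosen $R$-basis of $F_1$ each coordinate of $s$ lies in $\m\cap\soc R$. Independently, I pick an $R$-basis $e_1,\ldots,e_n$ of $F_2$ so that $d_2(e_1),\ldots,d_2(e_n)$ is an $R/I$-basis of $\syz_R^2 M$; this is possible because $\{d_2(e_i)\}$ is already a minimal $R$-generating set of $\syz_R^2 M$ and, for a free $R/I$-module, any minimal generating set is automatically an $R/I$-basis (the minimal numbers of generators over $R$ and over $R/I$ coincide). Writing $s=\sum_i r_i\,d_2(e_i)$, the $R/I$-freeness together with $\m s=0$ forces $\m r_i\subseteq I$ for every $i$, while $s\ne 0$ gives some $r_{i_0}\notin I$. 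Equivalently, $t:=\sum_i r_i e_i\in F_2$ satisfies $t\notin IF_2=\syz_R^3 M$ but $\m t\subseteq IF_2\subseteq\m F_2$, where the first equality follows from the $R/I$-basis choice and the last containment from minimality of $d_3$.

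To finish, I plan to play the socle description of $s$ in $F_1$ against the near-kernel position of $t$ in $F_2$, pushing everything one homological degree further along the resolution as in \cite[Lemma~2.5]{NSW}. The aim is either to produce an element of $F_3$ whose $d_3$-image has a coordinate outside $\m$---contradicting the minimality of $d_3$---or to force $\syz_R^2 M$ to be $R$-free, which combined with the fact that the socle extraction already forces $\depth R=0$ contradicts $\pd_R M\ge 2$ via the Auslander--Buchsbaum formula. The main obstacle is exactly this final extraction: each of the constraints $s\in\m F_1\cap\soc F_1$ and $\m t\subseteq IF_2$ is individually self-consistent, so the contradiction has to be manufactured out of their combined use through the $R/I$-basis identification relating $s$ and $t$. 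Mirroring the line-by-line bookkeeping of \cite[Lemma~2.5]{NSW} along the minimal resolution should close this step.
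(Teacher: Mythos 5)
Your preparatory steps are all fine (the nonzero element $s\in\syz_R^2M$ with $\m s=0$, the observation that its coordinates in $F_1$ lie in $\m\cap\soc R$, the identification $\syz_R^3M=IF_2$ coming from an $R/I$-basis of $\syz_R^2M$, and the element $t\in F_2$ with $t\notin IF_2$ and $\m t\subseteq IF_2$), but the proof is not finished: no contradiction is ever derived, and you say yourself that the ``final extraction'' is still to be manufactured. This is not a routine bookkeeping step that can be closed by mirroring \cite{NSW}: the constraints you have assembled are simultaneously satisfiable in situations where the conclusion fails. Take $R=k[x,y]/(x^2,y^2)$, $I=(x)$ and $M=R/(x)$. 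The minimal free resolution is $\cdots\xra{x}R\xra{x}R\xra{x}R\to M\to0$, so $\pd_RM=\infty$ and $\syz_R^2M=(0:_Rx)=(x)\cong R/I$ is $R/I$-free of rank one, while $\depth R/I=\depth k[y]/(y^2)=0$. In this example your data become $s=xy\in\m F_1\cap\soc F_1$ and $t=y\in F_2$ with $t\notin IF_2$ and $\m t=(xy)\subseteq IF_2=(x)$: every item on your list holds, yet the statement's conclusion is false. So for an arbitrary proper ideal $I$ (which is all your argument uses) the desired contradiction simply does not exist; some hypothesis linking $I$ to the ring must enter, and in this paper that is the decomposition $\m=I\oplus J$ of Proposition~\ref{prop020117a}, the only place the lemma is invoked.

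For comparison, the paper's proof never passes to $F_2$: it composes the injections $R/I\into\syz_R^2M\subseteq\m F_1=\m^{\oplus e}$ into a map $\phi$, sets $\xi=\phi(\overline1)$, and evaluates $\phi$ at a socle element $\overline s$ of $R/I$, concluding $\phi(\overline s)=s\xi=0$ against injectivity. The entire content of that last equality is the existence of a representative $s\notin I$ with $s\m=0$ in $R$ (so that $s$ kills every coordinate of $\xi\in\m^{\oplus e}$); the condition $\m s\subseteq I$, which is all that $\depth R/I=0$ gives and all that your element $t$ records, is strictly weaker, as the example above shows. The decomposable setting supplies exactly this missing ingredient: if $\m=I\oplus J$ with $J\ne0$ and $\depth R/I=0$, choose a nonzero socle element of $R/I$ with representative $s\in\m$ (possible since $I\ne\m$) and write $s=i+j$ with $i\in I$, $j\in J$; then $jI\subseteq IJ\subseteq I\cap J=0$ and $jJ=sJ\subseteq I\cap J=0$, so $j\in\soc R$, while $\overline j=\overline s\ne0$ in $R/I$. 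With this $j$ in place of $s$ the two-line embedding argument closes immediately. So the fix is not more bookkeeping along $d_3$, but restoring the hypothesis under which the lemma is actually used and producing a socle element of $R$ outside $I$.
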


\begin{proof}
By assumption there are injections $R/I\to\syz_R^2M\to\m R^{\oplus e}=\m^{\oplus e}$
for some $e>0$, and call this composition $\phi\colon R/I\to\m^{\oplus e}$.
Set $\xi=\phi(\overline 1)\in\m^{\oplus e}$.
If $R/I$ has depth zero, then it contains a non-zero socle $\overline s$, and we have $\phi(\overline s)=s\xi=0$, which contradicts the injectivity of the map $\phi$.
Hence, $R/I$ has positive depth.
\end{proof}

\begin{prop}\label{prop020117a}
Suppose that $\m$ has a direct sum decomposition $\m=I\oplus J$ in which $I,J$ are non-zero ideals of $R$. Let $M$ be an $R$-module with $\pd_RM\ge2$. If either $R/I$ or $R/J$ has depth zero, then $\syz_R^2M$ is neither $R/I$-free nor $R/J$-free.
\end{prop}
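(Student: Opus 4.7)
By the symmetry of the statement in $I$ and $J$, I may assume $R/I$ has depth zero. If $\syz_R^2 M$ were $R/I$-free then Lemma \ref{lem020117a} would force $R/I$ to have positive depth, contradicting the hypothesis; so the only thing to rule out is the possibility that $\syz_R^2 M \cong (R/J)^e$ for some $e\geq1$.

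Suppose then for contradiction that $\syz_R^2 M\cong(R/J)^e$. Applying Lemma \ref{lem020117a} with the roles of $I$ and $J$ interchanged yields that $R/J$ has positive depth, so $\socle(R/J)=0$ and hence
\[
\socle\bigl(\syz_R^2 M\bigr)=\bigl(\socle(R/J)\bigr)^{e}=0.
\]
The plan is now to exhibit a nonzero element of $\socle(\syz_R^2 M)$ for a contradiction. Because $R/I$ has depth zero and $J\neq 0$ (so $R/I$ is not a field), I can pick a socle representative $s\in R\setminus I$ with $s\in\m$ and $\m s\subseteq I$, and decompose $s=a+b$ according to $\m=I\oplus J$. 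Then $s\notin I$ forces $b\neq 0$; on the other hand $\m s\subseteq I$ together with $\m b\subseteq J$ gives $\m b\subseteq I\cap J=0$. So $b\in\socle R\cap J\setminus\{0\}$, and in particular $\socle R\neq 0$.

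Finally I would use the minimality of the resolution. The first differential $d_1\colon R^{\beta_1}\to R^{\beta_0}$ has all matrix entries in $\m$, so for any $y=(y_1,\ldots,y_{\beta_1})\in(\socle R)^{\beta_1}$ each coordinate of $d_1 y$ is a sum of products $(d_1)_{ij}\,y_j$ with $(d_1)_{ij}\in\m$ and $y_j\in\socle R$; each such term lies in $\m\cdot\socle R=0$. Hence $(\socle R)^{\beta_1}\subseteq\ker d_1=\syz_R^2 M$, and since these elements are already $\m$-annihilated,
\[
0\neq(\socle R)^{\beta_1}\subseteq\socle\bigl(\syz_R^2 M\bigr)=0,
\]
where non-vanishing of the left-hand side uses $\socle R\neq 0$ from the previous paragraph together with $\beta_1\geq 1$ (which holds because $\pd_R M\geq 2$ forces $\syz^1 M\neq 0$). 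This contradiction completes the argument.

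The only delicate point is the socle decomposition $b\in\socle R\cap J$ of a socle representative of $R/I$ across $\m=I\oplus J$; once that observation is in hand, the rest is an elementary manipulation exploiting only that the minimal boundary matrix has entries in $\m$.
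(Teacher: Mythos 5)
Your proof is correct and follows essentially the same route as the paper: Lemma \ref{lem020117a} is applied twice exactly as in the paper's proof, and the final contradiction is the same socle computation — minimality of the resolution forces the socle of $R^{\oplus\beta_1}$ into $\syz_R^2M\cong(R/J)^{\oplus e}$, which has zero socle because $R/J$ has positive depth. The only divergence is that where the paper invokes the depth formula \eqref{eq310117b} to conclude $\depth R=0$, you reprove the needed fact directly (decomposing a socle representative of $R/I$ as $s=a+b$ to produce $0\ne b\in\socle(R)\cap J$), a self-contained substitute for that citation rather than a different method.
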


\begin{proof}
Assume that $\depth R/I=0$. (The case where $\depth R/J=0$ is handled similarly.)
It follows from Lemma~\ref{lem020117a} that $\syz_R^2M$ is not $R/I$-free.
Suppose that $\syz_R^2M$ is $R/J$-free.
Then again by Lemma~\ref{lem020117a} the local ring $R/J$ has positive depth.
There is an exact sequence
$0 \to (R/J)^{\oplus a} \to R^{\oplus b} \xrightarrow{(c_{ij})} R^{\oplus d} \to M \to 0$
where each $c_{ij}$ is an element of $\m$ and $a,b,d$ are positive integers.
Applying the functor $\Hom_R(k,-)$ to this, we have an exact sequence
$$
0 \to \Hom_R(k,(R/J)^{\oplus a}) \to \Hom_R(k,R^{\oplus b}) \xrightarrow{0} \Hom_R(k,R^{\oplus d}).
$$
As $R/J$ has positive depth, $\Hom_R(k,(R/J)^{\oplus a})=0$.
We see from this exact sequence that $\Hom_R(k,R^{\oplus b})=0$, and hence
$\depth R>0$.
This is a contradiction because by~\eqref{eq310117b} we must have $\depth R=0$.
Thus, $\syz_R^2M$ is not $R/J$-free.
\end{proof}


Theorem~\ref{cor} is a consequence of the following result, which is the main result of this section.

\begin{thm}\label{1}
Let $(R,\fm)$ be a local ring, and suppose that there is a non-trivial direct sum decomposition $\m=I\oplus J$.
Let $M$ be an $R$-module with $\pd_RM\ge2$.
\begin{enumerate}[\rm(1)]
\item
One of the following holds true.
\begin{enumerate}[\rm(i)]
\item
$\m$ is a direct summand of $\syz_R^3M$ or $\syz_R^4M$.
\item
$\m$ is a direct summand of $\syz_R^5M$, and $\syz_R^2M$ is $R/I$-free.
\item
$\m$ is a direct summand of $\syz_R^5M$, and $\syz_R^2M$ is $R/J$-free.
\item
$\m$ is a direct summand of
$\Omega_R^3M\oplus\Omega_R^4M$, $R/I$ is a discrete valuation ring, and $\syz_R^2M$ is $R/J$-free.
\item
$\m$ is a direct summand of
$\Omega_R^3M\oplus\Omega_R^4M$, $R/J$ is a discrete valuation ring, and $\syz_R^2M$ is $R/I$-free.
\end{enumerate}
\item
If either $R/I$ or $R/J$ has depth $0$, then none of {\rm(ii)}-{\rm(v)} occurs.
\end{enumerate}
\end{thm}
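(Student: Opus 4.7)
Set $N:=\syz^2_R M$, which is nonzero since Fact~\ref{fact300117a} gives $\pd_R M=\infty$. The approach is a case analysis on whether $N$ is free over $R/I$, over $R/J$, or neither. Part (2) is immediate: each of conclusions (ii)--(v) requires $\syz^2_R M$ to be $R/I$-free or $R/J$-free, and Proposition~\ref{prop020117a} excludes this when $R/I$ or $R/J$ has depth zero; hence only (i) remains possible.

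Suppose first that $N\cong(R/I)^s$ is $R/I$-free; the $R/J$-free case is symmetric. Lemma~\ref{6} gives directly $\syz^3_R M=I^s$, and applying it once more to the $R/J$-module $I$ yields $\syz^4_R M=(J^{\nu_R(I)}\oplus\syz_{R/J}I)^s$. If $R/J$ is a discrete valuation ring, Lemma~\ref{14}(1) gives $I\cong R/J$, so $\syz_{R/J}I=0$ and $\syz^3_R M\oplus\syz^4_R M=I^s\oplus J^s\supseteq\m$; this is conclusion~(v). Otherwise Lemma~\ref{14}(2) forces $\syz_{R/J}I\neq 0$, and a third iteration of Lemma~\ref{6} shows $\syz^5_R M=(\syz_R^2 I)^s$ contains both $I$ (arising from $\syz_R J$) and $J$ (arising from $\syz_R\syz_{R/J}I$) as direct summands, yielding~(ii).

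The main work is conclusion~(i), when $N$ is neither $R/I$-free nor $R/J$-free. I would split further on the annihilator of $N$: if $\m N=0$ then $N\cong k^r$ and $\syz^3_R M=\m^r\supseteq\m$; if $IN=0$ but $JN\neq 0$, then $N$ is an $R/I$-module that fails to be $R/I$-free, so $\syz_{R/I}N\neq 0$, and Lemma~\ref{6} applied to $\syz_R N=I^b\oplus\syz_{R/I}N$ followed by another application to each summand exhibits both $I$ and $J$ as direct summands of $\syz^4_R M$; the symmetric sub-case $JN=0$, $IN\neq 0$ is analogous. The principal obstacle is the sub-case $IN\neq 0\neq JN$, where Lemma~\ref{6} does not apply directly to $N$. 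Here the plan is to combine two ingredients: (a) the unique decomposition $A=B+C$ of the presenting matrix of $N$ with entries of $B$, $C$ in $I$, $J$ respectively, which together with $I\cap J=0$ yields $\syz^3_R M=\ker A=\ker B\cap\ker C$; and (b) horseshoe on the short exact sequence $0\to IN\to N\to N/IN\to 0$, in which $IN$ is an $R/J$-module and $N/IN$ is an $R/I$-module so that Lemma~\ref{6} applies to each quotient and produces $I$ and $J$ in their syzygies. The main technical challenge is to control the horseshoe extensions so that the $I$ and $J$ summands produced on the kernel and cokernel survive as genuine direct summands of $\syz^3_R M$ or $\syz^4_R M$, rather than being absorbed into the extension; passing from the third to the fourth syzygy is expected to resolve this, since iterating Lemma~\ref{6} creates copies of $I$ and $J$ with redundant multiplicity that cannot all be absorbed.
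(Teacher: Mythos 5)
Your part (2), your treatment of the cases where $\syz_R^2M$ is free over $R/I$ or $R/J$, and your sub-cases where $\syz_R^2M$ is killed by $I$, by $J$, or by $\m$ are all correct, and there they run along the same lines as the paper (iterating Lemma~\ref{6} and using Lemma~\ref{14} and Proposition~\ref{prop020117a}). The genuine gap is exactly the sub-case you flag yourself: $I\cdot\syz_R^2M\neq0\neq J\cdot\syz_R^2M$. There your argument is not a proof but a hope. The identity $\ker A=\ker B\cap\ker C$ is true (since entries of $Bv$ lie in $I$, entries of $Cv$ lie in $J$, and $I\cap J=0$), but you do not extract any direct-summand information from it. The horseshoe construction on $0\to IN\to N\to N/IN\to0$ only yields a (generally non-minimal) resolution and an exact sequence relating syzygies; it gives no mechanism forcing the $I$- and $J$-summands of $\syz_R(IN)$ and $\syz_R(N/IN)$ to survive as direct summands of $\syz_R N$, and the closing claim that ``iterating Lemma~\ref{6} creates copies of $I$ and $J$ with redundant multiplicity that cannot all be absorbed'' is unsupported: nothing in your setup rules out cancellation or a non-split extension absorbing precisely those summands. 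As it stands, conclusion (i) is unproved in the one case where it is hardest.

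What you are missing is the structural theorem of Dress--Kr\"amer (cited in the paper as \cite{dress}; see also \cite[Proposition 4.2]{moore}): over a local ring with $\m=I\oplus J$ (equivalently a fiber product $(R/I)\times_k(R/J)$), every syzygy $\syz_R^iM$ with $i\ge2$ decomposes as $A\oplus B$ with $A$ an $R/I$-module and $B$ an $R/J$-module. This is precisely the statement that your ``mixed'' case never occurs in indecomposable form: when both components $A$ and $B$ are non-zero, Lemma~\ref{6} applied to each immediately exhibits $I\oplus J=\m$ as a summand of $\syz_R^3M$, and the paper's entire proof is a case analysis on which of the components of $\syz_R^2M,\syz_R^3M,\syz_R^4M$ vanish, with the degenerate patterns producing cases (ii)--(v). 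So either quote that decomposition theorem (after which your case analysis collapses to essentially the paper's), or supply an actual proof of it --- for instance by developing your matrix decomposition $A=B+C$ into an analysis of minimal free resolutions over the fiber product --- which is a substantial piece of work, not a detail to be deferred.
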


\begin{proof}
(1) It follows from~\cite{dress} (see also~\cite[Proposition 4.2]{moore}) that
$$
\syz_R^2M\cong A\oplus B,\qquad
\syz_R^3M\cong V\oplus W,\qquad
\syz_R^4M\cong X\oplus Y
$$
for some $R/I$-modules $A,V,X$ and $R/J$-modules $B,W,Y$.
By Lemma \ref{6}, we have\footnote{We use $\nu$ instead of $\nu_R$ to keep the notation simpler.}
\begin{align*}
\syz_RA&\cong I^{\oplus\nu(A)}\oplus\syz_{R/I}A,& \syz_RV&\cong I^{\oplus\nu(V)}\oplus\syz_{R/I}V,&\! \syz_RX&\cong I^{\oplus\nu(X)}\oplus\syz_{R/I}X,\\
\syz_RB&\cong J^{\oplus\nu(B)}\oplus\syz_{R/J}B,& \syz_RW&\cong J^{\oplus\nu(W)}\oplus\syz_{R/J}W,&\! \syz_RY&\cong J^{\oplus\nu(Y)}\oplus\syz_{R/J}Y.
\end{align*}

We consider the following five cases. Note that since $\pd_RM\ge2$, the second Betti number $\beta:=\beta_2^R(M)$ is non-zero.

Case 1: $A\ne0\ne B$. In this case, $I$ and $J$ are direct summands of $\syz_RA$ and $\syz_RB$, respectively.
Hence, $\m=I\oplus J$ is a direct summand of $\syz_RA\oplus\syz_RB=\syz_R^3M$.

Case 2: $A=0=V$. In this case, $\syz_R^2M=B$ and $\syz_R^3M=W$ are both $R/J$-modules, and annihilated by the ideal $J$ of $R$.
Hence, by the short exact sequence
$0\to \syz_R^3M \to R^{\oplus \beta} \to \syz_R^2M \to 0$
we observe that $J^2$ annihilates $R^{\oplus \beta}$.
Since $\beta\neq 0$ we get $J^2=0$. Therefore, $\m J=(I+J)J=0$.
This implies that $J$ is a $k$-vector space.
Since $B=\syz_R^2M\ne0$, the ideal $J$ is a direct summand of $\syz_RB=\syz_R^3M$.
Thus $k$ is a direct summand of $\syz_R^3M$, and $\m$ is a direct summand of $\syz_R^4M$.

Case 3: $B=0=W$. This case is treated similarly to Case 2, and we
see that $\m$ is a direct summand of $\syz_R^4M$.

Case 4: $A=0\ne V$. In this case, we must have $B\ne0$, and hence,
$n:=\nu(B)>0$.
If $W\ne0$, then $V\ne0\ne W$, and Case 1 shows that $\m$ is a direct summand of $\syz_R^4M$.
Thus, we may assume that $W=0$.
Then we have $\syz_R^2M\cong B$ and
$$
V\cong \syz_R^3M\cong \syz_RB\cong J^{\oplus n}\oplus\syz_{R/J}B.
$$
Since $V$ is annihilated by $I$, so is the direct summand $\syz_{R/J}B$.
As $\syz_{R/J}B$ is also annihilated by $J$, and by $\m=I\oplus J$, we see that $\syz_{R/J}B$ is a $k$-vector space.
If $\syz_{R/J}B$ is non-zero, then $k$ is a direct summand of $\syz_{R/J}B$, and hence, of $\syz_R^3M$.
In this case $\m$ is a direct summand of $\syz_R^4M$, and we are done.
Thus, we may assume that $\syz_{R/J}B=0$. In this case, $\syz_R^2M\cong B\cong (R/J)^{\oplus n}$ and we have
\begin{equation}\label{eq010217a}
V\cong \syz_R^3M\cong \syz_RB\cong J^{\oplus n}.
\end{equation}
If $Y=0$, since we already assumed that $W=0$, Case 3 shows that $\m$ is a direct summand of $\syz_R^5M$.
Hence, we may assume $Y\ne0$.
If $X\ne0\ne Y$, then Case 1 shows that $\m$ is a direct summand of $\syz_R^5M$.
Therefore, we may assume $X=0$.
Now we have
$$
Y\cong \syz_R^4M\cong \syz_RV\cong I^{\oplus m}\oplus\syz_{R/I}V
$$
where $m:=\nu(V)$.
We make a similar argument as above.
Since $Y$ is annihilated by $J$, so is $\syz_{R/I}V$, which is also annihilated by $I$.
Hence, $\syz_{R/I}V$ is a $k$-vector space.
If $\syz_{R/I}V$ is non-zero, then $k$ is a direct summand of $\syz_{R/I}V$, and thus, of $\syz_R^4M$.
In this case $\m$ is a direct summand of $\syz_R^5M$.
So, we may assume $\syz_{R/I}V=0$.
Under this assumption, $V$ is a free $R/I$-module and we get $V\cong (R/I)^{\oplus m}$.
Therefore, by~\eqref{eq010217a} we have
$J^{\oplus n}\cong (R/I)^{\oplus m}$.
Since $n>0$ we see that $J$ is a free $R/I$-module.
Now Lemma~\ref{14}(2) implies that $R/I$ is a discrete valuation ring.
Then $J$ is isomorphic to $R/I$ by Lemma \ref{14}(1). We obtain
isomorphisms $\syz_R^3M\cong J^{\oplus n}\cong(R/I)^{\oplus n}$ and
$\syz_R^4M\cong I^{\oplus n}$. Therefore, $\m=I\oplus J$ is a direct
summand of $\syz_R^3M\oplus\syz_R^4M$.

Case 5: $B=0\ne W$. This case is treated similarly to Case 4, and we see that one of the following holds: $\m$ is a direct summand of $\syz^4M$, or $\syz_R^2M$ is $R/I$-free and $\m$ is a direct summand of $\syz_R^5M$, or $\syz_R^2M$ is $R/I$-free and $R/J$ is a discrete valuation ring and $\m$ is a direct summand of
$\syz_R^3M\oplus\syz_R^4M$.

(2) This part follows from (1) and Proposition~\ref{prop020117a}.
\end{proof}

In the remainder of this section, we present several examples showing that each of the cases (i)-(v) in Theorem~\ref{1} occurs and these conditions are independent.

\begin{convention}
In the following examples, $k$ will be a field as before, and $X,Y, Z$ will be indeterminates over $k$. We will use the lower-case letters $x, y, z$ to
represent the residues of the variables $X, Y, Z$ in the ring $R$.
\end{convention}

We start by providing examples satisfying (iv) and (v), but not (i)-(iii).

\begin{ex}\label{11}
Consider the fiber product
$$
R=k[[X]]\times_kk[[Y]]\cong\frac{k[[X,Y]]}{(XY)}
$$
and set $I=(x)$ and $J=(y)$. We then have $\m=I\oplus J$.
We also have $R/J\cong k[[x]]$ and $R/I\cong k[[y]]$, which are discrete valuation rings.
Take $M=R/I$.
Then
$$
\syz_R^iM\cong
\begin{cases}
R/J & \text{if $i\ge0$ is odd},\\
R/I & \text{if $i\ge0$ is even}.
\end{cases}
$$
In particular, $\syz_R^2M$ is $R/I$-free, and $\syz_R^3M\oplus\syz_R^4M\cong\m$. Thus, (v) holds.
Moreover, $\m$ is not a direct summand of $\syz_R^iM$ for all $i\ge0$. Hence, none of (i)-(iii) holds.

Similarly, if we set $M=R/J$, then (iv) holds, but none of (i)-(iii) holds.
\end{ex}

The following is also an example that satisfies (iv), but not (i)-(iii).
Comparing with Example~\ref{11}, this example has the advantage that $R/J$ is not a discrete valuation ring, hence, it does not satisfy (v) as well. (By symmetry, one can construct an example that satisfies (v), but not (i)-(iv).)

\begin{ex}\label{10}
Consider the fiber product
$$
R=k[[X]]\times_kk[[Y,Z]]\cong\frac{k[[X,Y,Z]]}{(XY,XZ)}.
$$
Setting $I=(y,z)$ and $J=(x)$, we have $\m=I\oplus J$.
The ring $R/I=k[[x]]$ is a discrete valuation ring, while $R/J=k[[y,z]]$ is not.
One can easily check (by hand) that the minimal free resolution of the $R$-module $R/(y)$ has the form:
$$
\cdots\to R^5\xra{\left(
\begin{smallmatrix}
y&z&0&0&0\\
0&0&y&z&0\\
0&0&0&0&x
\end{smallmatrix}
\right)
}
R^3\xra{
\left(
\begin{smallmatrix}
x&0&z\\
0&x&-y
\end{smallmatrix}
\right)
}
R^2\xra{(y,z)}R\xra{x}R\xra{y} R\to R/(y)\to 0
$$
Note that it contains a minimal free presentation of $I$ and $\syz_RI\cong(R/I)^{\oplus2}\oplus R/(x)$.
Let $M$ be the {\em (Auslander) transpose} of $I$, that is, the $R$-module $M$ defined by the following exact sequence
$$
0 \to I^* \to R^2 \xrightarrow{
\left(
\begin{smallmatrix}
x&0\\
0&x\\
z&-y
\end{smallmatrix}
\right)
} R^3 \to M \to 0
$$
where $I^*:=\Hom_R(I,R)$.
Then $\syz_R^2M=I^*$.
From the natural short exact sequence $0\to I \to R\to R/I\to 0$, we have an exact sequence
$$
0 \to (R/I)^* \xrightarrow{f} R^* \to I^* \to \Ext_R^1(R/I,R)\to0.
$$
Note that the map $f$ can be described by the inclusion map $J=(0:I)\to R$, and hence, there is an exact sequence
\begin{equation}\label{2}
0 \to R/J \to I^* \to  \Ext_R^1(R/I,R)\to0.
\end{equation}
We claim that $\Ext_R^1(R/I,R)=0$.
Indeed, from the natural short exact sequence
$0 \to R \to \left(R/I\right)\oplus \left(R/J\right) \to k \to 0$
we get an exact sequence
$$
\left(R/I\right)^*\oplus \left(R/J\right)^* \xrightarrow{g} R^* \to \Ext_R^1(k,R) \to \Ext_R^1(\left(R/I\right)\oplus \left(R/J\right),R) \to0.
$$
Since the map $g$ can be described by the inclusion map $$\m=J\oplus I=(0:I)\oplus(0:J)\to R$$ its cokernel is isomorphic to $k$.
The element $x-y\in R$ is $R$-regular and annihilates $k$.
Using \cite[Lemma 3.1.16]{BH} and the isomorphism $R/(x-y)\cong k[[y,z]]/(y^2,yz)$, we easily see that
$\Ext_R^1(k,R)\cong\Hom_{R/(x-y)}(k,R/(x-y))\cong k$.
Thus, we obtain $\Ext_R^1(\left(R/I\right)\oplus \left(R/J\right),R)=0$, which implies that $\Ext_R^1(R/I,R)=0$, as we claimed.

This claim and \eqref{2} imply that
$\syz_R^2M=I^*\cong R/J$.
Combining this with the isomorphism $\syz_R^3M\oplus\syz_R^4M\cong\m$ shows that (iv) holds.

On the other hand, none of the syzygies
$\syz_R^3M=J$, $\syz_R^4M=I$, and $\syz_R^5M=\syz_RI=(R/I)^2\oplus \left(R/J\right)$
contain $\m$ as a direct summand.
In fact, taking into account the minimal numbers of generators, we see that $\m$ is not a direct summand of $J$ or $I$. Also, $\m\cong(R/I)^{\oplus 2}\oplus \left(R/J\right)$
if we assume that $\m$ is a direct summand of $\syz_RI$.
Then, however, localization at the prime ideal $I$ yields $R_I=\m R_I\cong\left(R_I/IR_I\right)^{\oplus2}$
which contradicts the fact that the local ring $R_I$ is indecomposable as a module over itself.
Consequently, none of (i)-(iii) and (v) holds.
\end{ex}

The following is an example that satisfies (ii), but not (i) and (iii)-(v). (By symmetry, one can construct an example that satisfies (iii), but not (i)-(ii) and (iv)-(v).)

\begin{ex}\label{17}
Let $R,I,J$ be as in Example \ref{10}, and let $M=R/(y)$.
Then
$$
\syz_R^3M=I,\qquad
\syz_R^4M=(R/I)^{\oplus2}\oplus (R/J),\qquad
\syz_R^5M=I^{\oplus2}\oplus J.
$$
By the argument in Example \ref{10}, $\m$ is not a direct summand of $\syz_R^3M$ or $\syz_R^4M$, while it is a direct summand of $\syz_R^5M$.
We also have $\syz_R^2M=R/I$, which is not $R/J$-free, and $R/J=k[[y,z]]$ is not a discrete valuation ring.
Thus, (i) and (iii)-(v) do not hold.
\end{ex}

The next examples satisfy (i), but they do not satisfy (ii)-(v).

\begin{ex}
(a) Consider the fiber product
$$
R=\frac{k[[X]]}{(X^2)}\times_kk[[Y]]\cong\frac{k[[X,Y]]}{(X^2,XY)}
$$
and set $I=(x)$ and $J=(y)$. Then we have $\m=I\oplus J$.
Since $R/J$ has depth zero, it follows from Theorem \ref{1} that none of (ii)-(v) holds.
Let $M=R/J$.
The minimal free resolution of $M$ is
$$
\cdots\to
R^3 \xra{\left(
\begin{smallmatrix}
x&y&0\\
0&0&x
\end{smallmatrix}
\right)}
R^2\xra{(x,y)}R\xra{x}R\xra{y}R\to M\to 0.
$$
It follows from this that $\syz_R^3M=\m\cong k\oplus \left(R/I\right)$, and we observe that $\m$ is a direct summand of $\syz_R^iM$ for all $i\ge3$.
In particular, statement (i) holds true.

We can also construct an example satisfying (i) but not (ii)-(v) using the ring of Example \ref{10}.

(b) Let $R,I,J$ be as in Example \ref{10}, and let $M=(y)\cong R/(x)$.
Then Example \ref{10} shows that $\m$ is not a direct summand of $\syz_R^3M=(R/I)^{\oplus2}\oplus (R/J)$, but it is a direct summand of $\syz_R^4M=I^{\oplus2}\oplus J$.
As $I^2=(y^2,yz,z^2)$ is non-zero, $\syz_R^2M=I$ is not $R/I$-free.
Since $R/J\cong k[[y,z]]$ is not a discrete valuation ring, $\syz_R^2M=I$ is not $R/J$-free by Lemma \ref{14}(2).
Thus, (ii)-(v) do not hold.
\end{ex}

\section{Local rings with quasi-decomposable maximal ideal}\label{a}

This section contains the proof of Theorem~\ref{Thm B}. Recall that the definitions and terminology have been introduced in Section~\ref{sec170215a}.

\begin{defn}
If there exists an $R$-sequence $\xx$ of length $n\ge 0$ such that $\m/(\xx)$ is decomposable, then we say that $\m$ is {\em quasi-decomposable} (with $\xx$).
\end{defn}

If $\m$ is decomposable, then of course $\m$ is quasi-decomposable.
However, the converse is far from being true.
We shall give several examples of a local ring with indecomposable quasi-decomposable maximal ideal at the end of this section.

We prove the following lemmas for the future use.

\begin{lem}\label{ooor}
Let $M$ be an $R$-module, and let $\xx$ be an $R$-sequence of length $n\ge 0$ annihilating $M$.
Then for all $t\ge0$ and $u\ge n$ there exists $v\ge0$ such that
$$
\syz_R^u\syz_{R/(\xx)}^tM\cong\syz_R^{u+t}M\oplus R^{\oplus v}.
$$
\end{lem}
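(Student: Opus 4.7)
The plan is to proceed by induction on $t$, and to reduce the inductive step to the case $t = 1$. For $t = 0$ the statement holds trivially with $v = 0$. For the inductive step, set $N := \syz_{R/(\xx)}^{t-1}M$, which is itself annihilated by $\xx$. Once the $t=1$ instance
\[
\syz_R^u \syz_{R/(\xx)} N \cong \syz_R^{u+1} N \oplus R^{\oplus v_1}
\]
is established for every $u\ge n$ and every such $N$, I would apply the inductive hypothesis to $\syz_R^{u+1}N=\syz_R^{u+1}\syz_{R/(\xx)}^{t-1}M$ at exponent $u+1\ge n$, obtaining $\syz_R^{u+1}N\cong \syz_R^{u+t}M\oplus R^{\oplus v_2}$, and splicing the two isomorphisms yields $\syz_R^u\syz_{R/(\xx)}^tM\cong\syz_R^{u+t}M\oplus R^{\oplus(v_1+v_2)}$, as desired.

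For the $t=1$ case, let $a := \nu_R(N) = \nu_{R/(\xx)}(N)$; the two numbers coincide because $(\xx)\subseteq\m$. Comparing the minimal free presentations of $N$ over $R$ and over $R/(\xx)$ and applying the snake lemma yields a short exact sequence
\[
0 \to (\xx)^a \to \syz_R N \to \syz_{R/(\xx)} N \to 0.
\]
I would then invoke the Horseshoe Lemma, using the minimal $R$-free resolutions of $(\xx)^a$ and $\syz_{R/(\xx)}N$ to construct a (generally non-minimal) $R$-free resolution of $\syz_R N$. At the $u$-th syzygy level this produces
\[
0 \to \syz_R^u(\xx)^a \to \Sigma_u \to \syz_R^u \syz_{R/(\xx)}N \to 0,
\]
where $\Sigma_u$ is the $u$-th syzygy of $\syz_R N$ in the Horseshoe resolution. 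Since any two free resolutions differ by contractible summands, $\Sigma_u\cong \syz_R^u\syz_R N\oplus R^{\oplus v_1}=\syz_R^{u+1}N\oplus R^{\oplus v_1}$ for some $v_1\ge 0$.

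The crux is the vanishing $\syz_R^u(\xx)^a=0$ for $u\ge n$. Because $\xx$ is an $R$-regular sequence of length $n$, the Koszul complex is the minimal $R$-free resolution of $R/(\xx)$ and has length exactly $n$, hence $\syz_R^{j}(R/(\xx))=0$ for $j\ge n+1$. The tautological short exact sequence $0\to(\xx)\to R\to R/(\xx)\to 0$ gives $\syz_R^u(\xx)\cong \syz_R^{u+1}(R/(\xx))$, which therefore vanishes precisely when $u\ge n$. Substituting this into the Horseshoe sequence collapses it to an isomorphism $\Sigma_u\cong \syz_R^u \syz_{R/(\xx)}N$, proving $\syz_R^u \syz_{R/(\xx)}N\cong \syz_R^{u+1}N\oplus R^{\oplus v_1}$ and closing the induction.

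The only subtle point is bookkeeping the free correction term that arises from comparing the (non-minimal) Horseshoe resolution of $\syz_R N$ with its minimal one — this is exactly what the summand $R^{\oplus v}$ in the conclusion is designed to absorb. The hypothesis $u\ge n$ is used in a single place, namely to trigger the Koszul vanishing that trivializes the left-hand term of the Horseshoe sequence.
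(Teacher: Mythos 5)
Your proof is correct and rests on essentially the same mechanism as the paper's: both arguments come down to comparing a non-minimal $R$-free resolution with the minimal one (absorbing the discrepancy into a free summand) and to the Koszul-complex fact that $\pd_R R/(\xx)=n$, which for $u\ge n$ makes the $(\xx)$- (resp.\ $R/(\xx)$-) contribution vanish (resp.\ become free). The only difference is packaging: you induct on $t$ using the snake-lemma sequence $0\to(\xx)^{\oplus a}\to\syz_RN\to\syz_{R/(\xx)}N\to0$ and the Horseshoe Lemma, whereas the paper takes $u$-th $R$-syzygies of the whole truncated $R/(\xx)$-free resolution in one step.
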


\begin{proof}
There is an exact sequence
$$
0 \to \syz_{R/(\xx)}^tM \to F_{t-1} \to F_{t-2} \to \cdots \to F_1 \to F_0 \to M \to 0
$$
of $R/(\xx)$-modules with each $F_i$ free over $R/(\xx)$.
Taking the $u$-th syzygies over $R$, we get an exact sequence
\begin{align*}
0 \to \syz_R^u\syz_{R/(\xx)}^tM &\to \syz_R^uF_{t-1}\oplus R^{\oplus a_{t-1}} \to \cdots \\
&\to \syz_R^uF_1\oplus R^{\oplus a_1} \to \syz_R^uF_0\oplus R^{\oplus a_0} \to \syz_R^uM \to 0
\end{align*}
for some $a_i\ge0$.
As $u\ge n$, each $\syz_R^uF_i$ is $R$-free.
This exact sequence then says that $\syz_R^u\syz_{R/(\xx)}^tM$ is isomorphic to the direct sum of $\syz^t\syz_R^uM=\syz^{u+t}_RM$ and some free $R$-module, as desired.
\end{proof}

\begin{lem}\label{res}
Let $M$ be an $R$-module, and let $\xx$ be an $M$-sequence of length $n\ge 0$.
Then $\syz_R^i(M/\xx M)$ belongs to the resolving closure $\res M$ of $M$ for all $i\ge n$.
\end{lem}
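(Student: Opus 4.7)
The plan is to argue by induction on the length $n$ of the sequence. The base case $n=0$ is immediate: $\xx$ is empty, $M/\xx M=M$ lies in $\res M$, and the resolving closure is by definition closed under syzygies, so $\syz_R^i M\in\res M$ for all $i\ge 0$.

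For the inductive step, I would set $N:=M/(x_1,\dots,x_{n-1})M$. The initial segment $x_1,\dots,x_{n-1}$ is an $M$-regular sequence of length $n-1$, so the inductive hypothesis yields $\syz_R^j N\in\res M$ for every $j\ge n-1$. Since $\xx$ is $M$-regular, $x_n$ is $N$-regular, and we have the short exact sequence
$$0\to N\xrightarrow{x_n}N\to M/\xx M\to 0.$$

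The technical heart of the argument is to reduce the syzygies of $M/\xx M$ to those of $N$. The plan is to take a free resolution $P_\bullet\to N$ and observe that the mapping cone of multiplication by $x_n$ on $P_\bullet$ is a (generally non-minimal) free $R$-resolution of $M/\xx M$; $N$-regularity of $x_n$ is precisely what guarantees acyclicity of this cone. A direct calculation of the $i$-th cycles in the cone then produces, for every $i\ge 1$, a short exact sequence
$$0\to \syz_R^i N\to S_i\to \syz_R^{i-1}N\to 0,$$
where $S_i$ denotes the $i$-th syzygy in the cone resolution and we set $\syz_R^0 N:=N$. Since any free resolution dominates the minimal one up to free summands, $S_i\cong \syz_R^i(M/\xx M)\oplus F_i$ for some free $R$-module $F_i$.

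The remainder is formal: for $i\ge n$ one has both $i\ge n-1$ and $i-1\ge n-1$, so $\syz_R^i N$ and $\syz_R^{i-1}N$ both lie in $\res M$ by the inductive hypothesis; closure of $\res M$ under extensions then places $S_i$ in $\res M$, and closure under direct summands gives $\syz_R^i(M/\xx M)\in\res M$. The step I expect to be the most delicate is the bookkeeping in the mapping cone that produces the displayed short exact sequence with the correct indexing of $\syz_R^i N$ versus $\syz_R^{i-1}N$; once this is in place the induction goes through cleanly.
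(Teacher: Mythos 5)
Your argument is correct, but it takes a genuinely different route from the paper. The paper's proof is a two-line reduction: since $\xx$ is an $M$-sequence, the Koszul complex $\k(\xx,M)$ is an acyclic complex with $\HH_0=M/\xx M$ whose terms are direct sums of copies of $M$, and then a quoted lemma of Takahashi (\cite[Lemma 4.3]{stcm}) immediately places $\syz_R^n(M/\xx M)$, hence all higher syzygies, in $\res M$. You instead induct on $n$, kill one element of the sequence at a time, and resolve the cokernel of $x_n\colon N\to N$ (where $N=M/(x_1,\dots,x_{n-1})M$) by the mapping cone of multiplication by $x_n$ on a free resolution of $N$; your cycle computation in the cone, giving $0\to\syz_R^iN\to S_i\to\syz_R^{i-1}N\to0$ with $\syz_R^0N:=N$, is correct with exactly the indexing you state (regularity of $x_n$ on $N$ is used both for acyclicity of the cone and, at the bottom step, to identify the cokernel-side term), and the passage from $S_i$ to $\syz_R^i(M/\xx M)$ via a free complement is legitimate since all modules are finitely generated over a local ring. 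In effect you have reproved, in the special case needed here, the content of the cited lemma: your version is self-contained and only uses the defining closure properties of a resolving subcategory (free modules, extensions, direct summands, syzygies), at the cost of the mapping-cone bookkeeping; the paper's version is shorter and treats the whole sequence at once via the Koszul complex, at the cost of an external reference. Conceptually the two are close, since the Koszul complex on $\xx$ is itself an iterated mapping cone of the multiplication maps you use one at a time.
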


\begin{proof}
Since $\xx$ is an $M$-sequence, the Koszul complex $\k(\xx,M)$
is acyclic with $\HH_0(\k(\xx,M))=M/\xx M$.
Each $\k(\xx,M)_i$ is a direct sum of copies of $M$, which belongs to $\res M$.
Hence, $\syz_R^n(M/\xx M)$ is in $\res M$ by \cite[Lemma 4.3]{stcm}. Therefore, $\syz_R^i(M/\xx M)$ is in $\res M$ for all $i\ge n$.
\end{proof}

\begin{lem}\label{34}
Let $R$ be a $d$-dimensional Cohen-Macaulay local ring with quasi-decomposable maximal ideal.
Let $\X$ be a resolving subcategory of $\mod R$ contained in $\cm(R)$.
If $\X$ contains a non-free $R$-module, then $\X$ also contains $\syz_R^dk$.
\end{lem}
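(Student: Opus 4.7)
The plan is to reduce everything modulo the regular sequence $\xx = x_1,\dots,x_n$ that witnesses quasi-decomposability, apply Theorem~\ref{cor} on $\bar R := R/(\xx)$, and then transfer the resulting direct-summand information back to $R$ using Lemmas~\ref{ooor} and~\ref{res}.

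First I would set up the reduction. Let $M \in \X$ be non-free. Since $\X \subseteq \cm(R)$, the module $M$ is MCM, and because $\xx$ is an $R$-sequence, it is also an $M$-sequence. Thus $\bar M := M/\xx M$ is MCM over $\bar R$; standard arguments (Nakayama, or the Koszul complex computation) show that $M$ non-free over $R$ forces $\bar M$ to be non-free over $\bar R$, and since $\bar R$ is Cohen-Macaulay this means $\pd_{\bar R}\bar M = \infty$. As $\bar\m$ is decomposable in $\bar R$, Theorem~\ref{cor} applied to $\bar R$ and $\bar M$ yields that $\bar\m$ is a direct summand of $\syz_{\bar R}^3\bar M \oplus \syz_{\bar R}^4\bar M \oplus \syz_{\bar R}^5\bar M$.

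Next I would transport this back to $R$-syzygies. Since $\bar M$ is annihilated by $\xx$, Lemma~\ref{ooor} applied with $u = n$ and $t \in \{3,4,5\}$ gives $\syz_R^n\syz_{\bar R}^t\bar M \cong \syz_R^{n+t}\bar M \oplus R^{\oplus v_t}$ for some $v_t \geq 0$. Hence $\syz_R^n\bar\m$ is a direct summand of $\bigoplus_{t=3}^5(\syz_R^{n+t}\bar M \oplus R^{\oplus v_t})$. Lemma~\ref{res} (since $\xx$ is $M$-regular) places every $\syz_R^{n+t}\bar M$ in $\res M \subseteq \X$, so this ambient direct sum lies in $\X$, and therefore $\syz_R^n\bar\m \in \X$. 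Applying Lemma~\ref{ooor} one more time to the $\bar R$-module $k$ with $u = n$, $t = 1$ gives $\syz_R^n\bar\m = \syz_R^n\syz_{\bar R}k \cong \syz_R^{n+1}k \oplus R^{\oplus v}$, and taking a direct summand yields $\syz_R^{n+1}k \in \X$.

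It remains to push this down to $\syz_R^d k$. Since $\X$ is resolving, it is closed under syzygies, so $\syz_R^j k \in \X$ for every $j \geq n+1$. A crucial auxiliary observation is that $n$ is forced to be close to $d$: applying Fact~\ref{fact300117a} to $\bar R$ (with the decomposition $\bar\m = \bar I \oplus \bar J$) gives $\depth \bar R \leq 1$, and since $\bar R$ is Cohen-Macaulay of dimension $d - n$, we conclude $d - n \leq 1$, i.e., $n \in \{d-1,\,d\}$. When $n = d-1$ the inclusion $\syz_R^{n+1}k \in \X$ already gives $\syz_R^d k \in \X$. The main obstacle is the remaining case $n = d$ (so $\bar R$ is artinian with decomposable maximal ideal): the syzygy-closure argument then only delivers $\syz_R^{d+1}k \in \X$, and one must show how to drop the index by one. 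Here I expect to use that, in the artinian setting, Theorem~\ref{1}(2) rules out cases (ii)-(v), so $\bar\m$ is already a direct summand of $\syz_{\bar R}^3\bar M$ or $\syz_{\bar R}^4\bar M$ alone, and then either a horseshoe analysis of $0 \to \bar\m \to \bar R \to k \to 0$ combined with the fact that $\syz_R^d\bar R$ is free (from the length-$d$ Koszul resolution), or a more refined application of Lemma~\ref{ooor} in tandem with the uniqueness of MCM modules modulo free summands, recovers $\syz_R^d k$ as a summand of a module already known to be in $\X$.
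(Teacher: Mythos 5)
Your reduction modulo $\xx$, the application of Theorem~\ref{cor} to $R/(\xx)$, and the transfer back to $R$ via Lemmas~\ref{ooor} and~\ref{res} is exactly the paper's strategy, and it is carried out correctly: it yields $\syz_R^{n+1}k\in\X$, and when $n\le d-1$ closure of $\X$ under syzygies gives $\syz_R^dk\in\X$ (the paper reaches $\syz_R^dk$ in one step by taking $u=d-1$ in Lemma~\ref{ooor}, but your variant is equivalent). Your re-derivation that $n\in\{d-1,d\}$ is also fine (it is Lemma~\ref{35} of the paper).

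The genuine gap is the case $n=d$, which you explicitly leave open and where none of your sketched fixes works. There your machinery only produces $\syz_R^{d+1}k\in\X$, and the whole difficulty is the descent to $\syz_R^dk$: resolving subcategories are closed under syzygies but not under cosyzygies. (a) Invoking Theorem~\ref{1}(2) to get $\m/(\xx)$ as a summand of $\syz_{R/(\xx)}^3$ or $\syz_{R/(\xx)}^4$ alone does not change the bottleneck, because Lemma~\ref{ooor} still requires $u\ge n=d$, so the lowest syzygy of $k$ you can reach remains $\syz_R^{d+1}k$. (b) The horseshoe applied to $0\to\m/(\xx)\to R/(\xx)\to k\to0$, together with the freeness of $\syz_R^d(R/(\xx))$, produces an exact sequence $0\to\syz_R^{d+1}k\oplus(\text{free})\to(\text{free})\to\syz_R^dk\to0$; this exhibits $\syz_R^dk$ as the cokernel of a monomorphism from an object of $\X$ into a free module, and resolving subcategories are not closed under such cokernels (for instance $R\xrightarrow{x}R$ with $x$ a regular non-unit shows $\add R$ is not), so membership in $\X$ does not follow. (c) There is no applicable ``uniqueness of MCM modules modulo free summands'' principle, and even granting one it would not place $\syz_R^dk$, as opposed to $\syz_R^{d+1}k$, inside $\X$. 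The paper closes this case with an external ingredient you are missing: since $\depth_R\syz_R^{d+1}k=d$, Proposition 4.2 of \cite{crspd} gives $\syz_R^dk\in\res(\syz_R^{d+1}k)\subseteq\X$. Without that (or an equivalent descent result) your proof is incomplete precisely in the case $n=d$, e.g.\ already when $R$ itself is artinian with decomposable maximal ideal.
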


\begin{proof}
Let $\xx$ be an $R$-sequence of length $n\ge 0$ such that $\m/(\xx)$ is decomposable, and let $X\in\X$ be a non-free $R$-module.
Then $X$ is maximal Cohen-Macaulay, and $\xx$ is an $X$-sequence. Hence, $\pd_{R/(\xx)}X/\xx X=\pd_RX=\infty$ by~\cite[Lemma 1.3.5]{BH}.
Theorem \ref{cor} implies that $\syz_{R/(\xx)}k=\m/(\xx)$ is a direct summand of
$$
\syz_{R/(\xx)}^3(X/\xx X)\oplus\syz_{R/(\xx)}^4(X/\xx X)\oplus\syz_{R/(\xx)}^5(X/\xx X)=Y/\xx Y
$$
where $Y:=\syz_R^3X\oplus\syz_R^4X\oplus\syz_R^5X\in\X$.

If $n\le d-1$, then by Lemma \ref{ooor} we have $\syz_R^{d-1}\syz_{R/(\xx)}k\cong\syz_R^dk\oplus F$ for some free $R$-module $F$.
The module $\syz_R^{d-1}\syz_{R/(\xx)}k$ is a direct summand of $\syz_R^{d-1}(Y/\xx Y)$.
As $Y$ is maximal Cohen-Macaulay, $\xx$ is a $Y$-sequence.
It follows from Lemma \ref{res} that $\syz_R^{d-1}(Y/\xx Y)$ is in $\X$.
Hence, so is $\syz_R^{d-1}\syz_{R/(\xx)}k$, and therefore, $\syz_R^dk$ is in $\X$.

Now let us consider the case $n=d$.
Similarly as above, we see that $\syz_R^d\syz_{R/(\xx)}k\cong\syz_R^{d+1}k\oplus F'$ for some free $R$-module $F'$, that $\syz_R^d\syz_{R/(\xx)}k$ is a direct summand of $\syz_R^d(Y/\xx Y)$, and that $\syz_R^d(Y/\xx Y)$ is in $\X$.
We also see that $\syz_R^{d+1}k$ belongs to $\X$.
Since $\depth_R \syz_R^{d+1}k=d$, it follows from~\cite[Proposition 4.2]{crspd} that $\syz_R^dk$ is in $\res(\syz_R^{d+1}k\oplus \syz_R^{d+1}k)=\res\syz_R^{d+1}k$. Hence, $\syz_R^dk$ is in $\X$, as desired.
\end{proof}

Now we can state and prove the main result of this section.

\begin{thm}\label{31}
Let $(R,\m)$ be a $d$-dimensional Cohen-Macaulay singular local ring and assume that $\m$ is quasi-decomposable.
\begin{enumerate}[\rm(1)]
\item
Suppose that $\speco R$ is a hypersurface or has minimal multiplicity.
Then one has the following commutative diagram of mutually inverse bijections:
$$
\xymatrix{
{\left\{
\begin{matrix}
\text{Resolving subcategories of}\\
\text{$\mod R$ contained in $\cm(R)$}
\end{matrix}
\right\}}
\ar@<.7mm>[r]^-\nf\ar@{=}[d] &
{\left\{
\begin{matrix}
\text{Specialization-closed}\\
\text{subsets of $\sing R$}
\end{matrix}
\right\}}
\ar@<.7mm>[l]^-{\nf^{-1}_\cm}\ar@<.7mm>[d]^{\ipd^{-1}} \\
{\left\{
\begin{matrix}
\text{Thick subcategories of}\\
\text{$\cm(R)$ containing $R$}
\end{matrix}
\right\}}
\ar@<.7mm>[r]^{\thick_{\mod R}} \ar@<.7mm>[d]^{\thick_{\D_{\sg}(R)}}&
{\left\{
\begin{matrix}
\text{Thick subcategories of}\\
\text{$\mod R$ containing $R$}
\end{matrix}
\right\}}
\ar@<.7mm>[l]^{\rest_{\cm(R)}}\ar@<.7mm>[u]^\ipd \ar@<.7mm>[d]^{\thick_{\D^{\b}(R)}}\\
{\left\{
\begin{matrix}
\text{Thick subcategories of}\\
\text{$\D_{\sg}(R)$}
\end{matrix}
\right\}}
\ar@<.7mm>[r]^{\pi^{-1}}  \ar@<.7mm>[u]^{\rest_{\cm(R)}}&
{\left\{
\begin{matrix}
\text{Thick subcategories of}\\
\text{$\D^{\b}(R)$ containing $R$}
\end{matrix}
\right\}}
\ar@<.7mm>[l]^{\pi}\ar@<.7mm>[u]^{\rest_{\mod R}}
}
$$
\item
Assume that $R$ is excellent and has a canonical module $\omega$.
Suppose that $\speco R$ has finite Cohen-Macaulay representation type.
Then one has the following commutative diagram of mutually inverse bijections:
$$
\xymatrix{
{\left\{
\begin{matrix}
\text{Resolving subcategories}\\
\text{of $\mod R$ contained in}\\
\text{$\cm(R)$ and containing $\omega$}
\end{matrix}
\right\}}
\ar@<.7mm>[r]^-\nf\ar@{=}[d] &
{\left\{
\begin{matrix}
\text{Specialization-closed}\\
\text{subsets of $\sing R$}\\
\text{containing $\ng R$}
\end{matrix}
\right\}}
\ar@<.7mm>[l]^-{\nf^{-1}_\cm}\ar@<.7mm>[d]^{\ipd^{-1}} \\
{\left\{
\begin{matrix}
\text{Thick subcategories of}\\
\text{$\cm(R)$ containing}\\
\text{$R$ and $\omega$}
\end{matrix}
\right\}}
\ar@<.7mm>[r]^{\thick_{\mod R}} \ar@<.7mm>[d]^{\thick_{\D_{\sg}(R)}}&
{\left\{
\begin{matrix}
\text{Thick subcategories of}\\
\text{$\mod R$ containing}\\
\text{$R$ and $\omega$}
\end{matrix}
\right\}}
\ar@<.7mm>[l]^{\rest_{\cm(R)}}\ar@<.7mm>[u]^\ipd \ar@<.7mm>[d]^{\thick_{\D^{\b}(R)}}\\
{\left\{
\begin{matrix}
\text{Thick subcategories of}\\
\text{$\D_{\sg}(R)$ containing $\omega$}
\end{matrix}
\right\}}
\ar@<.7mm>[r]^{\pi^{-1}}  \ar@<.7mm>[u]^{\rest_{\cm(R)}}&
{\left\{
\begin{matrix}
\text{Thick subcategories of}\\
\text{$\D^{\b}(R)$ containing}\\
\text{$R$ and $\omega$}
\end{matrix}
\right\}}
\ar@<.7mm>[l]^{\pi}\ar@<.7mm>[u]^{\rest_{\mod R}}
}
$$
\end{enumerate}
\end{thm}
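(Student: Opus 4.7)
The plan is to establish the top-row bijection of the diagram in part (1) first; the remaining bijections will then follow from general categorical correspondences already developed in [stcm, crs, thd]. The only place where the quasi-decomposability hypothesis actually enters is through Lemma \ref{34}, so the heart of the proof is to see how that lemma plugs into an existing classification scheme.

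For the top bijection, $\nf$ obviously sends resolving subcategories of $\mod R$ inside $\cm(R)$ to specialization-closed subsets of $\sing R$, and the composition $\nf \circ \nf^{-1}_{\cm}$ is the identity on the set side because for each $\mathfrak{p} \in \sing R$ one can exhibit an explicit maximal Cohen-Macaulay module (e.g.\ a suitable syzygy of $R/\mathfrak{p}$) whose non-free locus is $V(\mathfrak{p})$. The crux is the inclusion $\nf^{-1}_{\cm}(\nf \X) \subseteq \X$ for a resolving subcategory $\X \subseteq \cm(R)$. The hypothesis that $\speco R$ is locally a hypersurface or has minimal multiplicity ensures, via the local classifications in [stcm, crs], that at each $\mathfrak{p} \in \sing R$ any resolving subcategory of $\cm(R_{\mathfrak{p}})$ containing a non-free module is all of $\cm(R_{\mathfrak{p}})$. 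A local-to-global passage then reduces proving $M \in \X$ for $M \in \nf^{-1}_{\cm}(\nf \X)$ to producing a single ``universal'' module in $\X$ that detects every prime in $\nf \X$; this universal module is $\syz^d k$, and its membership in $\X$ is exactly the content of Lemma \ref{34}.

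Granted the top bijection, the vertical equivalences in the diagram are formal. The correspondence between resolving subcategories of $\mod R$ contained in $\cm(R)$ and thick subcategories of $\cm(R)$ containing $R$ is standard under the maximal Cohen-Macaulay hypothesis (closure under syzygies inside $\cm(R)$ matches closure under the shift in $\D_{\sg}(R)$). The equivalence between thick subcategories of $\mod R$ containing $R$ and thick subcategories of $\D^{\b}(R)$ containing $R$ follows by soft truncation, and the final layer between $\D^{\b}(R)$ and $\D_{\sg}(R)$ is the Verdier-quotient correspondence; all of this is recorded in [thd]. Part (2) is proved by the same scheme, with ``containing $R$'' replaced by ``containing $R$ and $\omega$'' throughout, and with the specialization-closed sets required to contain $\ng R$ since $\omega$ is locally free precisely off $\ng R$; the excellent/canonical-module hypotheses guarantee good behavior under localization and completion, and the local classification on $\speco R$ under finite Cohen-Macaulay representation type is provided by [stcm].

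The main obstacle is not Lemma \ref{34} itself---which is already in hand---but the precise bookkeeping needed to conclude $M \in \X$ from $M \in \nf^{-1}_{\cm}(\nf \X)$ using only the punctured-spectrum hypothesis on $\speco R$ together with the globally-available containment $\syz^d k \in \X$. In previous work, statements of this type were proved under a stronger global hypothesis such as isolated singularity; here one only has the local hypothesis, and Lemma \ref{34} must be shown to be strong enough to bridge the gap. Once this reduction is carried out, the remainder of the proof is a matter of verifying that each vertical arrow in the diagram is well-defined and mutually inverse to its opposite.
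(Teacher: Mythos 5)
Your overall strategy is the paper's: use Lemma \ref{34} (the only place quasi-decomposability enters) to force $\syz_R^dk\in\X$ for any resolving subcategory $\X\subseteq\cm(R)$ containing a non-free module, then feed this into the existing classifications (\cite[Theorem 5.13]{stcm} for the hypersurface case, \cite[Theorem 5.6]{crs} for minimal multiplicity, \cite[Theorem 6.11, Corollary 6.12]{crs} for part (2)) to get the top bijection, and finally assemble the lower layers via \cite[Theorem 1]{KS} and the Verdier-quotient correspondence. However, two of your supporting claims are inaccurate as stated. First, your gloss on why the punctured-spectrum hypothesis helps --- ``at each $\p\in\sing R$ any resolving subcategory of $\cm(R_\p)$ containing a non-free module is all of $\cm(R_\p)$'' --- is false unless $R_\p$ has an isolated singularity; for a hypersurface $R_\p$ with non-isolated singular locus the resolving subcategories of $\cm(R_\p)$ are parametrized by the specialization-closed subsets of $\sing R_\p$ and are plentiful. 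The cited theorems are global statements about resolving subcategories of $\cm(R)$ containing $\syz_R^dk$ (which is exactly why Lemma \ref{34} is the right input); they are not obtained by the naive local-to-global argument you sketch, so you should simply invoke them rather than re-derive them.

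Second, the vertical layers are not all formal. The identification of resolving subcategories of $\mod R$ contained in $\cm(R)$ with thick subcategories of $\cm(R)$ containing $R$ is not a standard fact: a resolving subcategory is a priori not closed under cokernels of monomorphisms between maximal Cohen-Macaulay modules, and the paper proves thickness only \emph{after} the top bijection, by writing $\X=\nf^{-1}_\cm(W)$ and using the Auslander--Buchsbaum formula. Similarly, the bijection $(\ipd,\ipd^{-1})$ between specialization-closed subsets of $\sing R$ and thick subcategories of $\mod R$ containing $R$ is not automatic: for $M\in\ipd^{-1}(\ipd\X)$ one must show $\nf(\syz_R^dM)\subseteq\nf(\X\cap\cm(R))$, apply the already-established equality $\nf^{-1}_\cm(\nf(\X\cap\cm(R)))=\X\cap\cm(R)$ to conclude $\syz_R^dM\in\X$, and then climb back to $M$ using thickness and $R\in\X$. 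These are short arguments, but they are consequences of the classification rather than bookkeeping, and your write-up would need to supply them (together with the well-definedness of the twelve maps via \cite[Lemma 2.6]{crs} and the identities $\X=\thick_{\mod R}(\X\cap\cm(R))$ and $\ipd(\thick_{\mod R}\X)=\nf(\X)$ that make the squares commute).
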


\begin{proof}
Note that $\ng(R)=\nf(\omega)=\ipd(\omega)$ for part (2). Hence, by~\cite[Lemma 2.6]{crs} we observe that all the twelve maps in Theorem~\ref{31} are well-defined, and that
\begin{equation}\label{nf}
\nf(\nf^{-1}_\cm(W))=W,\qquad
\ipd(\ipd^{-1}(W))=W
\end{equation}
for each specialization-closed subset $W$ of $\spec R$ contained in $\sing R$.

We first show that
\begin{equation}\label{tr}
\X=\thick_{\mod R}(\rest_{\cm(R)}\X)=\thick_{\mod R}(\X\cap\cm(R))
\end{equation}
for each thick subcategory $\X$ of $\mod R$ containing $R$.
Since $\X$ is thick and contains $\X\cap\cm(R)$, we see that $\X$ contains $\thick_{\mod R}(\X\cap\cm(R))$.
Take any module $X\in\X$.
Then there is an exact sequence $0 \to \syz^dX\to F_{d-1}\to\cdots\to F_0\to X\to0$
of $R$-modules with each $F_i$ free.
Note that $\syz^dX$ and $F_i$ for $0\le i\le d-1$ are in $\X\cap\cm(R)$. Thus, these modules belong to $\thick_{\mod R}(\X\cap\cm(R))$.
Therefore, $X$ is also in $\thick_{\mod R}(\X\cap\cm(R))$, and we have the equalities in~\eqref{tr}.

Next we show that
\begin{equation}\label{it}
\ipd(\thick_{\mod R}\X)=\nf(\X)
\end{equation}
for any subcategory $\X$ of $\cm(R)$.
It is straightforward to check that $$\ipd(\thick_{\mod R}\X)=\ipd(\X)\subseteq\nf(\X).$$
Let $\p$ be a prime ideal such that $X_\p$ is a non-free $R_\p$-module for some $X\in\X$.
Since the $R_\p$-module $X_\p$ is maximal Cohen-Macaulay, it has infinite projective dimension.
Hence, $\nf(\X)$ is contained in $\ipd(\X)$. Therefore, we have the equality in~\eqref{it}.

In view of \eqref{nf}, \eqref{tr} and \eqref{it}, we show the following three statements.
\begin{enumerate}[(a)]
\item
$\nf^{-1}_{\cm}\cdot\nf=1$.
\item
$\ipd^{-1}\cdot\ipd=1$.
\item
The equalities in the diagrams hold.
\end{enumerate}

For (a) fix a resolving subcategory $\X$ of $\mod R$ contained in $\cm(R)$.
If $\X=\add R$, then $\nf(\X)$ is empty, and we have $\nf^{-1}_\cm(\nf(\X))=\add R=\X$.
Thus, we may assume that $\X\ne\add R$.
Then $\X$ contains a non-free $R$-module, and it follows from Lemma \ref{34} that $\X$ contains $\syz^dk$.
If $\speco R$ is a hypersurface (resp. has minimal multiplicity), then by virtue of \cite[Theorem 5.13]{stcm} (resp. \cite[Theorem 5.6]{crs}) we obtain the equality $\nf^{-1}_\cm(\nf(\X))=\X$.
If $R$ is excellent, $\speco R$ has finite Cohen-Macaulay representation type, and $\X$ contains a canonical module, then the equality $\nf^{-1}_\cm(\nf(\X))=\X$ follows from \cite[Theorem 6.11 and Corollary 6.12]{crs}.

For (b) let $\X$ be a thick subcategory of $\mod R$ containing $R$.
Then $\X$ is contained in $\ipd^{-1}(\ipd(\X))$.
Now for an $R$-module $M$ in $\ipd^{-1}(\ipd(\X))$ we claim that
$$
\nf(\syz_R^dM)\subseteq\nf(\X\cap\cm(R)).
$$
Indeed, let $\p$ be a prime ideal such that $(\syz_R^dM)_\p$ is non-free.
Then the $R_\p$-module $\syz_{R_\p}^d(M_\p)$ is non-free, and $M_\p$ has infinite projective dimension.
Hence, $\p$ is in $\ipd(M)$, which is contained in $\ipd(\X)$. This implies that $X_\p$ has infinite projective dimension for some $X$ in $\X$.
Therefore, $(\syz_R^dX)_\p$ is non-free, which says that $\p$ belongs to $\nf(\syz_R^dX)$.
Since $\syz_R^dX$ is in $\X\cap\cm(R)$, the claim follows.

This claim implies that $\syz^dM$ belongs to $\nf^{-1}_\cm(\nf(\X\cap\cm(R)))$.
Note that $\X\cap\cm(R)$ is a resolving subcategory contained in $\cm(R)$.
In each of the cases where
$\speco R$ is a hypersurface, where $\speco R$ has minimal multiplicity, and where $R$ is excellent, $\speco R$ has finite Cohen-Macaulay representation type and $\X$ contains a canonical module,
it follows from (a) that $\nf^{-1}_\cm(\nf(\X\cap\cm(R)))=\X\cap\cm(R)$.
Hence, $\syz^dM$ is in $\X$, and so is $M$ as $\X$ is thick and contains $R$.

Finally, for (c) let $\X$ be a resolving subcategory of $\mod R$ contained in $\cm(R)$.
By (a)
we have $\X=\nf^{-1}_\cm(\nf(\X))$; setting $W=\nf(\X)$, we have $\X=\nf^{-1}_\cm(W)$.
Let $0 \to A \to B \to C \to 0$
be an exact sequence of maximal Cohen-Macaulay $R$-modules.
If $A$ and $B$ are in $\X$, then $\nf(A)$ and $\nf(B)$ are contained in $W$, and we see that $\nf(C)$ is also contained in $W$ by the Auslander-Buchsbaum formula.
Hence $C$ is in $\X$, which shows that $\X$ is a thick subcategory of $\cm(R)$. The proof of (c) is now complete.

The one-to-one correspondence $(\rest_{\mod R},\thick_{\D^{\b}(R)})$ in (1) and (2)
follows from \cite[Theorem 1]{KS}, while the one-to-one correspondence
$(\pi^{-1},\pi)$ is shown by noting that for two objects
$M,N\in\D^{\b}(R)$ with $\pi M\cong\pi N$ one has
$M\in\thick_{\D^{\b}(R)}\{R,N\}$.

It is straightforward that
$\rest_{\cm(R)}\X=\rest_{\cm(R)}(\rest_{\mod R}(\pi^{-1}\X))$ for a
(thick) subcategory $\X$ of $\D_{\sg}(R)$.
Let $\Y$ be a thick subcategory of $\cm(R)$ containing $R$.
Then $\pi(\thick_{\D^{\b}(R)}(\thick_{\mod R}\Y))=\pi(\thick_{\D^{\b}(R)}\Y)$
is a thick subcategory of $\D_{\sg}(R)$ containing $\pi\Y$.
If $\Z$ is a thick subcategory of $\D_{\sg}(R)$ containing $\pi\Y$, then
$\Y$ is contained in $\pi^{-1}\Z$, and so is $\thick_{\D^{\b}(R)}\Y$.
It follows that $\pi(\thick_{\D^{\b}(R)}\Y)=\thick_{\D_{\sg}(R)}\Y$.
Thus, the lower squares in the diagrams in (1) and (2) are commutative.
This completes the proof of Theorem~\ref{31}.
\end{proof}

Our Theorem~\ref{Thm B} can now be deduced from Theorem~\ref{31}.

\begin{para}[\emph{Proof of Theorem~\ref{Thm B}}]\label{para22022017a}
Let $\X$ be a thick subcategory of $\D_{\sg}(R)$, and let $W$ be a
specialization-closed subset of $\sing R$.
According to Theorem \ref{31}, it suffices to show the equalities:
\begin{enumerate}[(a)]
\item
$\ipd(\rest_{\mod R}(\pi^{-1}\X))=\ssupp\X$; and
\item
$\pi(\thick_{\D^{\b}(R)}(\ipd^{-1}W))=\ssupp^{-1}W$.
\end{enumerate}

For (a), pick a prime ideal $\p\in\ipd(\rest_{\mod
R}(\pi^{-1}\X))=\ipd(\pi^{-1}\X\cap\mod R)$.
Then there is an $R$-module $M$ such that $X:=\pi M\in\X$ and
$\pd_{R_\p}M_\p=\infty$.
Hence, $X_\p\not\cong0$ in $\D_{\sg}(R_\p)$, which implies that $\p$ is in $\ssupp\X$.
Conversely, pick a prime ideal $\p\in\ssupp\X$.
Then there is an object $X\in\X$ such that $X_\p\not\cong0$ in $\D_{\sg}(R_\p)$.
Choose an object $Y\in\D^{\b}(R)$ with $\pi Y=X$.
There exists an exact triangle $P \to Y \to M[n] \rightsquigarrow$
in $\D^{\b}(R)$ such that $P\in\thick_{\D^{\b}(R)}R$, $M\in\mod R$, and $n\in\mathbb{Z}$.
Hence $X=\pi Y=(\pi M)[n]$, which implies that $\pi M$ is in $\X$, and
hence, $M$ is in $\pi^{-1}\X\cap\mod R=\rest_{\mod R}(\pi^{-1}\X)$.
Since $\p$ belongs to $\ipd(M)$, it belongs to $\ipd(\rest_{\mod
R}(\pi^{-1}\X))$.

For (b), let $A$ be an object in $\pi(\thick_{\D^{\b}(R)}(\ipd^{-1}W))$.
Then $A\cong \pi B$ for some $B\in\thick_{\D^{\b}(R)}(\ipd^{-1}W)$.
Hence, $\ssupp A=\ssupp(\pi B)=\ipd(B)$.
If $\p$ is a prime ideal outside $W$, then $(\ipd^{-1}(W))_\p$ is
contained in $\PD(R_\p)$, and so is
$(\thick_{\D^{\b}(R)}(\ipd^{-1}(W)))_\p$, which contains $B_\p$.
Hence, $B_\p$ has finite projective dimension as an $R_\p$-module,
which shows that $\ipd(B)$ is contained in $W$.
Therefore, $A$ belongs to $\ssupp^{-1}W$.
Conversely, let $A$ be an object in $\ssupp^{-1}W$, and write $A=\pi
B$ with $B\in\D^{\b}(R)$.
Then it is easy to see that $\ipd(B)$ is contained in $W$, and $B$
belongs to $\thick_{\D^{\b}(R)}(\ipd^{-1}W)$.
Hence, $A$ is in $\pi(\thick_{\D^{\b}(R)}(\ipd^{-1}W))$.
\qed
\end{para}

We close this section by presenting several examples of a Cohen-Macaulay non-Gorenstein local ring with quasi-decomposable maximal ideal; actually, it turns out that there are plenty of such examples.

\begin{ex}\label{mm}
Let $R$ be a $d$-dimensional Cohen-Macaulay local ring which is not a hypersurface.
Suppose that $k$ is infinite and that $R$ has minimal multiplicity.
Then $\m$ is quasi-decomposable.
In fact, we find a system of parameters $\xx$ of $R$ with $\m^2=\xx\m$.
Then we have $\m(\m/(\xx))=0$, which means that $\m/(\xx)$ is a $k$-vector space.
Since $R$ is not a hypersurface, the dimension of $\m/(\xx)$ as a $k$-vector space is at least two.
Hence $\m/(\xx)$ is decomposable.
\end{ex}

\begin{ex}\label{ex20170916}
Let $R$ be a 2-dimensional non-Gorenstein
normal local domain with a rational singularity. Then by~\cite[Theorem 3.1]{Huneke-Watanabe}, the ring $R$ has minimal multiplicity. Therefore, by Example~\ref{mm} it has a
quasi-decomposable maximal ideal.
\end{ex}

\begin{ex}\label{ex150217a}
Let $H=\langle pq+p+1, 2q+1, p+2\rangle$
be the numerical semigroup with $p,q>0$ and $\gcd(p + 2, 2q + 1) = 1$, and let $R=k[[H]]$ be (the completion of) the numerical semigroup ring of $H$ over a field $k$.
Then $R$ is a non-Gorenstein almost-Gorenstein (see \cite{almgor}) Cohen-Macaulay local domain of dimension $1$, and $\m$ is quasi-decomposable.
In fact, by virtue of \cite[Proposition 2(1)]{Mo}, we have
\begin{equation}\label{eq150217a}
R\cong \frac{k[[x,y,z]]}{\I\left(\begin{smallmatrix}
x&y&z\\
y^p&z^q&x
\end{smallmatrix}\right)}
\end{equation}
where $\I\,(-)$ stands for the ideal generated by $2\times2$-minors.
This implies $R/(z)\cong k[[x,y]]/(x^2,xy,y^{p+1})$, and hence we obtain a non-trivial direct sum decomposition $\m/(z)\cong x(R/(z))\oplus y(R/(z))$.
\end{ex}

\begin{ex}
Let $R=k[[H]]$ be a non-Gorenstein almost-Gorenstein numerical semigroup ring with embedding dimension $3$ and multiplicity $\le6$.
Then $\m$ is quasi-decomposable.
Indeed, by \cite[Proposition 2(2)]{Mo} we again have the isomorphism~\eqref{eq150217a} with $p,q>0$, and a similar argument as in Example~\ref{ex150217a} applies.
\end{ex}

\begin{ex}
For each of the Cohen-Macaulay local rings $R$ given in \cite[Examples 7.1, 7.2, 7.4 and 7.5]{crs} the maximal ideal is quasi-decomposable, since the quotient of a suitable system of parameters is isomorphic to the ring
$$
k[s,t]/(s^2,st,t^2)=k[s]/(s^2)\times_kk[t]/(t^2),
$$
where $s,t$ are indeterminates over the field $k$.
We should notice that these four examples also explain the independence of the assumptions of Theorem \ref{31} on the punctured spectrum.
(None of them has an isolated singularity.)
\end{ex}

\section{More on thick subcategories of $\mod R$}\label{b}

As another application of Theorem~\ref{1}, in this section we study the thick subcategories of $\mod R$ that contain a module of infinite projective dimension where $\m$ is quasi-decomposable.
We begin with making a lemma.

\begin{lem}\label{18}
Let $M$ be an $R$-module and $\xx=x_1,\dots,x_n$ be an $R$-sequence. Then $\xx$ is an $\syz_R^nM$-sequence.
\end{lem}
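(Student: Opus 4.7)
The plan is to deduce this from the Tor-vanishing characterization of regular sequences, together with a dimension shift along a free resolution of $M$.

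First I would recall that, since $\xx = x_1, \dots, x_n$ is an $R$-sequence of length $n$ lying in the maximal ideal, the Koszul complex $\k(\xx, R)$ is a free $R$-resolution of $R/(\xx)$; hence $\pd_R R/(\xx) = n$ and therefore $\tor_j^R(R/(\xx), N) = 0$ for every $R$-module $N$ and every $j > n$. In particular, $\tor_{i+n}^R(R/(\xx), M) = 0$ for all $i \geq 1$.

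Next, I would take a free resolution
$$0 \to \syz_R^n M \to F_{n-1} \to \cdots \to F_0 \to M \to 0$$
of $M$, break it into short exact sequences $0 \to \syz_R^{j+1} M \to F_j \to \syz_R^j M \to 0$, and use the associated long exact sequences in $\tor_*^R(R/(\xx), -)$. Since $\tor_i^R(R/(\xx), F_j) = 0$ for $i \geq 1$, these collapse into the iterated dimension shift
$$\tor_i^R(R/(\xx), \syz_R^n M) \cong \tor_{i+n}^R(R/(\xx), M)$$
valid for every $i \geq 1$; by the previous step the right-hand side is zero.

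Finally, because $\k(\xx, R)$ freely resolves $R/(\xx)$, the Koszul homologies satisfy $\HH_i(\k(\xx, \syz_R^n M)) \cong \tor_i^R(R/(\xx), \syz_R^n M)$, which therefore vanish for all $i \geq 1$. As $\xx$ lies in $\m$ and $\syz_R^n M$ is finitely generated, the standard Koszul criterion (e.g.\ \cite[Theorem 1.6.17]{BH}) then forces $\xx$ to be a regular sequence on $\syz_R^n M$, as required. I expect no real obstacle here: the only point worth emphasizing is that the Koszul criterion yields regularity in the prescribed order, which holds automatically in the local setting since there Koszul-regularity is insensitive to permutation.
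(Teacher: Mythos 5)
Your argument is correct, but it takes a genuinely different route from the paper. The paper proves the lemma by induction on $n$: assuming $\xx'=x_1,\dots,x_{n-1}$ is regular on both $\syz_R^{n-1}M$ and $\syz_R^nM$, it reduces the short exact sequence $0\to\syz_R^nM\to F\to\syz_R^{n-1}M\to0$ modulo $\xx'$, so that $\syz_R^nM/\xx'\syz_R^nM$ embeds into the free $R/(\xx')$-module $F/\xx'F$, on which $x_n$ is a nonzerodivisor because $\xx$ is an $R$-sequence; injectivity of multiplication by $x_n$ on the submodule then completes the induction. You instead argue in one shot through homological machinery: $\k(\xx,R)$ resolves $R/(\xx)$, so $\Tor^R_{j}(R/(\xx),M)=0$ for $j>n$; dimension shifting along the free resolution gives $\Tor^R_i(R/(\xx),\syz_R^nM)=0$ for all $i\ge1$, i.e.\ the Koszul homology $\HH_i(\k(\xx,\syz_R^nM))$ vanishes in positive degrees, and the local Koszul criterion (depth sensitivity, with $\xx\subseteq\m$ and $\syz_R^nM$ finitely generated) yields regularity. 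Both proofs are sound; the paper's is more elementary, using only the standard fact that killing elements regular on the cokernel preserves exactness and that submodules of free modules inherit regularity of elements, while yours trades that for the Koszul/depth-sensitivity theorem and gets the full sequence at once without induction — in effect proving the slightly stronger statement that $\grade((\xx),\syz_R^nM)\ge n$ directly. The only cosmetic caveat (shared with the paper) is the degenerate case $\syz_R^nM=0$, where the assertion holds only by the usual convention for weak regular sequences on the zero module.
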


\begin{proof}
We use induction on $n$.
The assertion is trivial when $n=0$.
Let $n>0$.
The induction hypothesis implies that the sequence $\xx':=x_1,\dots,x_{n-1}$ is regular on both $\syz^{n-1}M$ and $\syz^nM=\syz^{n-1}(\syz M)$.
The exact sequence $$0\to\syz^nM\to F\to\syz^{n-1}M\to0$$ in which $F$ is free induces an exact sequence
$$
0 \to \syz^nM/\xx'\syz^nM\to F/\xx'F\to\syz^{n-1}M/\xx'\syz^{n-1}M\to0.
$$
This shows that the multiplication map of $\syz^nM/\xx'\syz^nM$ by $x_n$ is injective. Thus, $\xx=x_1,\dots,x_n$ is regular on $\syz^nM$.
\end{proof}

The next proposition plays a central role in this section.

\begin{prop}\label{3}
Suppose that $\m$ is quasi-decomposable with an $R$-sequence $\xx$ of length $n$. If $M$ is an $R$-module with $\pd_RM\ge n+2$, then $k$ is in $\X:=\thick_{\mod R}\{R,M\}$.
\end{prop}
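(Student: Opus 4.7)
The plan is to reduce modulo $\xx$ so that Theorem~\ref{cor} applies to $\bar R := R/(\xx)$, and then to transport the conclusion back to $R$ and use thickness to descend from a high syzygy of $k$ down to $k$ itself.

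First I set $N := \syz_R^n M$ and $\bar N := N/\xx N$. By Lemma~\ref{18}, $\xx$ is $N$-regular; since $\pd_R M \ge n+2$ we have $\pd_R N \ge 2$, and the standard change-of-rings equality $\pd_{\bar R}\bar N = \pd_R N$ (cf.~\cite[Lemma~1.3.5]{BH}) gives $\pd_{\bar R}\bar N \ge 2$. Because $\bar R$ has decomposable maximal ideal $\bar\m = \m/(\xx)$, Theorem~\ref{cor} applied to $\bar N$ over $\bar R$ shows that $\bar\m$ is a direct summand of $T := \syz_{\bar R}^{3}\bar N \oplus \syz_{\bar R}^{4}\bar N \oplus \syz_{\bar R}^{5}\bar N$.

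Next I transport this statement from $\bar R$-modules to $R$-modules. Since $\syz_R^n$ of a direct sum is the direct sum of $\syz_R^n$, we have $\syz_R^n T = \syz_R^n\syz_{\bar R}^{3}\bar N \oplus \syz_R^n\syz_{\bar R}^{4}\bar N \oplus \syz_R^n\syz_{\bar R}^{5}\bar N$, and three applications of Lemma~\ref{ooor} give $\syz_R^n T \cong \syz_R^{n+3}\bar N \oplus \syz_R^{n+4}\bar N \oplus \syz_R^{n+5}\bar N \oplus R^{\oplus v}$ for some $v \ge 0$. Therefore $\syz_R^n\bar\m$ is a direct summand of this module. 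By Lemma~\ref{res}, each $\syz_R^i\bar N$ with $i \ge n$ lies in $\res N$. Because $\X$ is thick and contains $R$, it is automatically resolving, and iterating the exact sequences $0 \to \syz_R^{i+1}M \to R^{\oplus b_i} \to \syz_R^i M \to 0$ from $M \in \X$ yields $N = \syz_R^n M \in \X$; hence $\res N \subseteq \X$, and consequently $\syz_R^n\bar\m \in \X$.

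Finally, one more application of Lemma~\ref{ooor} with $k$ in the role of the module annihilated by $\xx$ (taking $u=n$, $t=1$) gives $\syz_R^n\bar\m = \syz_R^n\syz_{\bar R}^{1}k \cong \syz_R^{n+1}k \oplus R^{\oplus w}$, so $\syz_R^{n+1}k$ is a direct summand of an object of $\X$ and therefore lies in $\X$. Running up the minimal free resolution of $k$ and invoking the two-out-of-three property of the thick subcategory $\X$ on the exact sequences $0 \to \syz_R^{i+1}k \to R^{\oplus\beta_i^R(k)} \to \syz_R^i k \to 0$ for $i = n, n-1, \dots, 0$ yields $\syz_R^i k \in \X$ step by step, and in particular $k \in \X$. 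There is no real obstacle beyond careful bookkeeping: I need to track the free $R$-summands produced by Lemma~\ref{ooor}, check that $\xx$ remains a regular sequence on each module to which the lemmas are applied, and confirm that the hypothesis $\pd_R M \ge n+2$ is precisely what yields the threshold $\pd_{\bar R}\bar N \ge 2$ needed to invoke Theorem~\ref{cor} after reduction modulo $\xx$.
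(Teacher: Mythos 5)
Your proof is correct. It differs from the paper's argument in how the conclusion over $\bar R=R/(\xx)$ is brought back into $\X$. The paper's proof of this proposition is shorter and more categorical: after reducing modulo $\xx$ and noting $\pd_{R/(\xx)}(\syz_R^nM/\xx\syz_R^nM)=\pd_R\syz_R^nM\ge2$, it treats the decomposable case directly (there $\m\in\X$ from Theorem \ref{1} and then $k\in\X$ from $0\to\m\to R\to k\to0$), and for $n>0$ it simply observes that $k\in\thick_{\mod R/(\xx)}\{R/(\xx),\syz_R^nM/\xx\syz_R^nM\}\subseteq\thick_{\mod R}\{R/(\xx),\syz_R^nM/\xx\syz_R^nM\}\subseteq\X$, using that $R/(\xx)$ has finite projective dimension and $\syz_R^nM/\xx\syz_R^nM\in\X$ (via the Koszul complex, as in Lemma \ref{res}); since $k$ is itself an $R/(\xx)$-module, no further translation is needed. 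You instead transport the direct-summand statement of Theorem \ref{cor} back through $R$-syzygies: Lemma \ref{ooor} converts $\syz_{R/(\xx)}^t$ of $\xx$-annihilated modules into $\syz_R^{n+t}$ up to free summands, Lemma \ref{res} places those syzygies in $\res\syz_R^nM\subseteq\X$ (using that a thick subcategory containing $R$ is resolving), and a second application of Lemma \ref{ooor} to $k$ identifies $\syz_R^n(\m/(\xx))$ with $\syz_R^{n+1}k$ up to frees, after which you descend along the minimal resolution of $k$. This is essentially the mechanism the paper uses to prove Lemma \ref{34} (the maximal Cohen--Macaulay analogue for resolving subcategories), so your route is a legitimate alternative; it costs two extra lemmas and some bookkeeping of free summands, while the paper's transfer of thick-closure membership across the ring change avoids all of that. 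One cosmetic point: when you invoke Theorem \ref{cor} you only know $\pd_{\bar R}\bar N\ge2$, whereas that theorem is stated for infinite projective dimension; this is harmless because Fact \ref{fact300117a} shows the two are equivalent over a ring with decomposable maximal ideal (and Theorem \ref{1}, from which Theorem \ref{cor} follows, is stated with the hypothesis $\pd\ge2$), but it deserves a word.
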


\begin{proof}
We consider the following cases.

Case 1: $n=0$.
In this case $\m$ is decomposable, and we can apply Theorem \ref{1}.
Since $\X$ contains all the syzygies of $M$ and their direct summands, we are done.

Case 2: $n>0$.
It follows from Lemma \ref{18} that $\xx$ is an $\syz_R^nM$-sequence.
Using \cite[Lemma 1.3.5]{BH}, we have
$$
\pd_{R/(\xx)}\syz_R^nM/\xx \syz_R^nM=\pd_R\syz_R^nM\ge(n+2)-n=2.
$$
By Case 1 we have $k$ is in $\thick_{\mod R/(\xx)}\{R/(\xx),\syz_R^nM/\xx \syz_R^nM\}$. Therefore, $k$ is in $\thick_{\mod R}\{R/(\xx),\syz_R^nM/\xx \syz_R^nM\}$.
The modules $R/(\xx)$ and $\syz_R^nM/\xx \syz_R^nM$ are in $\thick_{\mod R}\{R,\syz_R^nM\}=\X$.
Therefore, $k$ belongs to $\X$, as desired.
\end{proof}

\begin{cor}\label{19}
Suppose that $R$ has an isolated singularity (that is, $\sing R$ is a point), and that $\m$ is quasi-decomposable with an $R$-sequence $\xx$ of length $n$.
\begin{enumerate}[\rm(1)]
\item
If $\X$ is a thick subcategory of $\mod R$ containing $R$ and at least one module of projective dimension $\ge n+2$, then $\X=\mod R$.
\item
There exists no thick subcategory $\X$ with $\PD(R)\subsetneq\X\subsetneq\mod R$.
\end{enumerate}
\end{cor}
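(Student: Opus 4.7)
The plan is to use Proposition~\ref{3} to promote $k$ into $\X$ and then invoke the thick-subcategory classification of Theorem~\ref{31}(1), which, under the isolated singularity hypothesis, leaves essentially no room: since $\sing R = \{\m\}$, the only specialization-closed subsets of $\sing R$ are $\emptyset$ and $\{\m\}$.

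For (1), I first apply Proposition~\ref{3}: $\X$ is thick and contains both $R$ and a module $M$ with $\pd_R M \ge n + 2$, so $k \in \X$. Next, Theorem~\ref{31}(1), through the bijection $\ipd^{-1}$, tells us that the only thick subcategories of $\mod R$ containing $R$ are $\PD(R) = \ipd^{-1}(\emptyset)$ and $\mod R = \ipd^{-1}(\{\m\})$. Since $R$ is singular, $\pd_R k = \infty$, so $k \notin \PD(R)$, forcing $\X = \mod R$. Part (2) is then immediate: a thick subcategory $\X$ with $\PD(R) \subsetneq \X \subsetneq \mod R$ would contain some $M$ with $\pd_R M = \infty \ge n + 2$, and (1) would give $\X = \mod R$, contradicting $\X \subsetneq \mod R$.

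The main point to verify is that Theorem~\ref{31}(1) genuinely applies. Among its hypotheses, the quasi-decomposability of $\m$ and the singularity of $R$ (encoded in $\sing R = \{\m\} \ne \emptyset$) are explicit, while the condition on $\speco R$ is automatic: since $R$ has isolated singularity, $R_\p$ is regular, hence a hypersurface, for every $\p \in \speco R$. The Cohen-Macaulay hypothesis is understood from the standing framework of Section~\ref{a} in which Theorem~\ref{31} is established; with this in place, the classification does the rest and no further obstacle remains.
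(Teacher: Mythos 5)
Your first step (Proposition~\ref{3} puts $k$ into $\X$) and your deduction of (2) from (1) agree with the paper, but the way you finish (1) has a genuine gap: Corollary~\ref{19} does not assume that $R$ is Cohen-Macaulay (nor even that $R$ is singular), and there is no standing Cohen-Macaulay hypothesis in Section~\ref{b} -- the only global convention is that $R$ is a noetherian local ring. Theorem~\ref{31} is stated for a \emph{Cohen-Macaulay singular} local ring, so your appeal to its bijection $(\ipd,\ipd^{-1})$ to conclude that $\PD(R)$ and $\mod R$ are the only thick subcategories containing $R$ is not licensed by the hypotheses of the corollary; the claim that Cohen-Macaulayness is ``understood from the standing framework of Section~\ref{a}'' is simply not true. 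This is not a vacuous concern: the fiber product $R=k[[y,z,w]]\times_kk[[x]]\cong k[[x,y,z,w]]/(xy,xz,xw)$ has decomposable maximal ideal (so $\m$ is quasi-decomposable with $n=0$), has an isolated singularity, but has depth $1$ and dimension $3$, hence is not Cohen-Macaulay, so Theorem~\ref{31} says nothing about it while Corollary~\ref{19} still applies. (The case where $R$ is regular, excluded by the ``singular'' hypothesis of Theorem~\ref{31}, is harmless since then statement (1) is vacuous and $\PD(R)=\mod R$; the Cohen-Macaulay restriction is the real loss of generality.)

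The paper closes the argument differently and without any Cohen-Macaulay assumption: once Proposition~\ref{3} gives $k\in\X$, it invokes the result of Schoutens \cite[Theorem VI.8]{S} (see also \cite[Proposition 9]{KS}) that for a local ring with an isolated singularity one has $\thick_{\mod R}\{R,k\}=\mod R$; since $\X$ is thick and contains both $R$ and $k$, it follows at once that $\X=\mod R$. If you want to keep your classification-based argument, you would have to add the Cohen-Macaulay (and singular) hypotheses to the statement, which would weaken the corollary; otherwise you need an input of the Schoutens/Krause--Stevenson type to generate all of $\mod R$ from $R$ and $k$.
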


\begin{proof}
(1) Since we assume that $R$ has an isolated singularity, by \cite[Theorem VI.8]{S} (see also \cite[Proposition 9]{KS}) we have $\thick_{\mod R}\{R,k\}=\mod R$.
The assertion now follows from Proposition \ref{3}.

(2) If $\X$ is a thick subcategory strictly containing $\PD(R)$, then $\X$ contains a module of infinite projective dimension, and $\X=\mod R$ by (1).
\end{proof}

\begin{disc}\label{30}
Corollary \ref{19}(2) holds whenever $R$ is a hypersurface with an isolated singularity.
In fact, since $R$ is a hypersurface, by \cite[Theorem 5.1(1)]{thd} there is a one-to-one correspondence between the thick subcategories of $\mod R$ containing $R$ and the specialization-closed subsets of $\sing R$.
As $R$ has an isolated singularity, $\sing R$ is trivial, and thus there is no thick subcategory $\X$ with $\PD(R)\subsetneq\X\subsetneq\mod R$.
\end{disc}

The following is also a corollary of Proposition \ref{3} (hence one of Theorem \ref{1}), which is a variant of \cite[Proposition 5.2]{crs}.

\begin{cor}
Let $R$ be a Cohen-Macaulay non-Gorenstein local ring with minimal multiplicity and infinite residue field.
Let $M$ be an $R$-module with infinite projective dimension.
Then $k$ belongs to $\thick_{\mod R}\{R,M\}$.
\end{cor}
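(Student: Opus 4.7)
The plan is to combine Example~\ref{mm} with Proposition~\ref{3}. Since $R$ is Cohen-Macaulay non-Gorenstein with minimal multiplicity and infinite residue field, and since every hypersurface is Gorenstein, $R$ is not a hypersurface. Therefore Example~\ref{mm} applies: there exists a system of parameters $\xx=x_1,\dots,x_d$ of $R$ with $\m^2=\xx\m$, and $\m/(\xx)$ is a $k$-vector space of dimension at least two, hence decomposable. In particular, $\xx$ is an $R$-regular sequence (as $R$ is Cohen-Macaulay of dimension $d$) witnessing that $\m$ is quasi-decomposable.

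Next, observe that since $\pd_R M=\infty$, trivially $\pd_R M\ge d+2$. So Proposition~\ref{3}, applied with $n=d$, yields that $k\in\thick_{\mod R}\{R,M\}$, as desired.

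The entire argument is essentially a direct invocation of two earlier results, so there is no real obstacle; the only thing to check is the mild observation that the non-Gorenstein hypothesis rules out $R$ being a hypersurface, which is what allows Example~\ref{mm} to produce a \emph{decomposable} (rather than merely generated by one element) quotient $\m/(\xx)$.
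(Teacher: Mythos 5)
Your argument is exactly the paper's proof: the non-Gorenstein hypothesis rules out the hypersurface case, so Example~\ref{mm} gives quasi-decomposability of $\m$, and Proposition~\ref{3} (with $\pd_RM=\infty\ge n+2$) yields $k\in\thick_{\mod R}\{R,M\}$. The proposal is correct and matches the paper's approach, with only a bit more detail spelled out.
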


\begin{proof}
In view of Example \ref{mm}, the maximal ideal of $R$ is quasi-decomposable. Hence the assertion follows from Proposition \ref{3}.
\end{proof}

\section{Vanishing of Ext and Tor}\label{c}

In this section we give some Ext and Tor vanishing results over local rings with quasi-decomposable maximal ideal as applications of Theorem~\ref{1}. Some of these results refine the existing results of~\cite{NSW} as we explain below.
We start by a lemma.

\begin{lem}\label{16}
Let $M$ and $N$ be $R$-modules.
Let $t\ge0$ be an integer.
\begin{enumerate}[\rm(1)]
\item
Suppose that $\m$ is a direct summand of $\syz^tM$.
\begin{enumerate}[\rm(a)]
\item
If $\Tor_{\ell}^R(M,N)=0$ for some $\ell\ge t+1$, then $\pd_RN<\infty$.
\item
If $\Ext^{\ell}_R(M,N)=0$ for some $\ell\ge t+\max\{1,\depth_R N\}$, then $\id_RN<\infty$.
\end{enumerate}
\item
Suppose that $\m$ is a direct summand of $\syz^tM\oplus\syz^{t+1}M$.
\begin{enumerate}[\rm(a)]
\item
If $\Tor_{\ell}^R(M,N)=\Tor_{\ell+1}^R(M,N)=0$ for some $\ell\ge t+1$, then $\pd_RN<\infty$.
\item
If $\Ext^{\ell}_R(M,N)=\Ext_R^{\ell+1}(M,N)=0$ for some $\ell\ge t+\max\{1,\depth_R N\}$, then $\id_RN<\infty$.
\end{enumerate}
\end{enumerate}
\end{lem}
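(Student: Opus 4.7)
The plan is to use dimension shifting to convert Tor or Ext vanishing for $M$ at degree $\ell$ into the corresponding vanishing for $\syz^t M$ at degree $\ell-t$, pick off the direct summand $\m$, and then translate to vanishing for the residue field $k$ via the short exact sequence $0\to\m\to R\to k\to 0$. All four subparts are variations on this single argument.

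For (1)(a), the standard shift $\Tor^R_{\ell-t}(\syz^t M,N)\cong\Tor^R_\ell(M,N)$, valid for $\ell-t\geq 1$, together with the hypothesis gives $\Tor^R_{\ell-t}(\syz^t M,N)=0$; since $\m$ is a direct summand of $\syz^t M$, this yields $\Tor^R_{\ell-t}(\m,N)=0$. The long exact sequence of $0\to\m\to R\to k\to 0$ then produces $\Tor^R_{\ell-t+1}(k,N)\cong\Tor^R_{\ell-t}(\m,N)=0$, and a single zero Betti number forces $\pd_R N<\infty$. Part (1)(b) proceeds symmetrically: the Ext-shift and the summand argument give $\Ext^{\ell-t+1}_R(k,N)=0$, equivalently $\mu^{\ell-t+1}_R(N)=0$. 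The bound $\ell\geq t+\max\{1,\depth_R N\}$ is calibrated precisely so that $\ell-t+1>\depth_R N$, and the classical criterion that the vanishing of a Bass number strictly beyond the depth forces $\id_R N<\infty$ finishes the argument.

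Part (2) differs only in that $\m$ is a direct summand of the direct sum $\syz^t M\oplus \syz^{t+1}M$, so one must verify vanishing at degree $\ell-t$ for \emph{both} summands before concluding anything about $\m$. The dimension shift sends these to the hypothesized vanishings at degrees $\ell$ and $\ell+1$, which is exactly why the statement in (2) demands two consecutive vanishings; after that the residue-field reduction is identical.

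The only non-formal step in the whole plan is the Bass-number criterion invoked in the Ext cases---that a single zero Bass number strictly beyond the depth forces finite injective dimension---and that is where one must be most careful in writing up the proof. Every other ingredient, namely dimension shifting, stability of Tor and Ext under direct summands, and the long exact sequences of $0\to\m\to R\to k\to 0$, is routine homological bookkeeping.
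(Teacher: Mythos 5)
Your proposal is correct and follows essentially the same route as the paper: dimension shifting to $\syz^t M$ (resp.\ $\syz^t M\oplus\syz^{t+1}M$), extracting the summand $\m$, translating via $0\to\m\to R\to k\to 0$ to vanishing of a Betti (resp.\ Bass) number of $N$, and invoking the Foxby--Fossum--Griffith--Reiten criterion that a vanishing Bass number beyond $\depth_R N$ forces $\id_R N<\infty$. No gaps; the degree bookkeeping ($\ell-t\ge1$ and $\ell-t+1>\depth_R N$) matches the paper exactly.
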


\begin{proof}
(1a) Since $\ell-t>0$, we have $\Tor_{\ell}^R(M,N)\cong \Tor_{\ell-t}^R(\syz^tM,N)$.
By assumption we get $\Tor_{\ell-t+1}^R(k,N)\cong\Tor_{\ell-t}^R(\m,N)=0$.
Hence, $\pd_RN<\infty$.

(1b) As $\ell-t>0$, we have $\Ext^{\ell}_R(M,N)\cong \Ext^{\ell-t}_R(\syz^tM,N)$.
The assumption implies $\Ext_R^{\ell-t+1}(k,N)\cong\Ext_R^{\ell-t}(\m,N)=0$.
Since $\ell-t+1>\depth N$, by~\cite[Theorem (1.1)]{FFGR} we conclude that $\id_RN<\infty$.

(2a) Since $\ell-t>0$, we have
$$
0=\Tor_{\ell}^R(M,N)\oplus\Tor_{\ell+1}^R(M,N)\cong\Tor_{\ell-t}^R(\syz^tM\oplus\syz^{t+1}M,N),
$$
which contains $\Tor_{\ell-t}^R(\m,N)\cong\Tor_{\ell-t+1}^R(k,N)$ as a direct summand.
Hence $N$ has finite projective dimension.

(2b) This follows from a similar argument to the proofs of (2a) and (1b).
\end{proof}

Our Theorem \ref{1} recovers~\cite[Theorems 1.1]{NSW} as we state next.

\begin{cor}[Nasseh and Sather-Wagstaff]\label{15}
Assume that $R=S\times_kT$ is a fiber product, where $S$ and $T$ are local rings with common residue field $k$ and $S\ne k\ne T$.
Let $M$ and $N$ be $R$-modules.
\begin{enumerate}[\rm(1)]
\item
If $S$ or $T$ has depth $0$ and $\Tor_{\ell}^R(M,N)=0$ for some $\ell\ge5$, then $M$ or $N$ is $R$-free.
\item
If $\Tor_{\ell}^R(M,N)=\Tor_{\ell+1}^R(M,N)=0$ for some $\ell\ge5$, then $\pd_R M\le 1$ or $\pd_R N\le 1$.
\end{enumerate}
\end{cor}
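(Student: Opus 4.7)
The plan is to combine the structural Theorem \ref{1} with the elementary vanishing consequences recorded in Lemma \ref{16}. First, I would translate the fiber-product setup: by Fact \ref{fact300117a}, $R \cong S \times_k T$ with $S \neq k \neq T$ corresponds to a non-trivial decomposition $\m = I \oplus J$ with $R/I \cong S$ and $R/J \cong T$, and \eqref{eq310117b} shows that $\depth R = 0$ whenever $S$ or $T$ has depth $0$.

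For part (1) I will show that, assuming $M$ is not $R$-free (the other case being symmetric in the roles of $M,N$), $N$ must be $R$-free. The Auslander--Buchsbaum formula together with $\depth R = 0$ forces any $R$-module of finite projective dimension to be free, so $M$ not free gives $\pd_R M = \infty$, hence in particular $\pd_R M \geq 2$. The depth-zero hypothesis then activates Theorem \ref{1}(2), eliminating cases (ii)--(v) and leaving only (i): $\m$ is a direct summand of $\syz_R^3 M$ or $\syz_R^4 M$. Lemma \ref{16}(1a), applied with $t=3$ or $t=4$ as appropriate, then turns the single Tor vanishing at $\ell \geq 5 \geq t+1$ into $\pd_R N < \infty$; one final Auslander--Buchsbaum argument delivers that $N$ is $R$-free.

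For part (2) the approach is by contradiction: assume $\pd_R M \geq 2$ and $\pd_R N \geq 2$ simultaneously, so both are infinite by Fact \ref{fact300117a}. Applying Theorem \ref{1}(1) to $M$ places us in one of the five cases (i)--(v). In every case $\m$ is a direct summand of $\syz_R^t M \oplus \syz_R^{t+1} M$ for some $t \in \{3,4\}$: take $t=3$ in cases (i), (iv), (v), and $t=4$ in cases (ii), (iii) (where $\m \subseteq \syz_R^5 M \subseteq \syz_R^4 M \oplus \syz_R^5 M$). Lemma \ref{16}(2a) then applies with $\ell \geq 5 \geq t+1$ and forces $\pd_R N < \infty$, which Fact \ref{fact300117a} sharpens to $\pd_R N \leq 1$, contradicting the standing assumption.

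I do not expect any substantial obstacle; the argument is essentially a matching of indices. The only point requiring a moment of care is the observation that, even in cases (ii)--(iii) of Theorem \ref{1} where only $\syz_R^5 M$ contains $\m$ as a direct summand, the pair-of-consecutive-Tors hypothesis in part (2) still lets us invoke Lemma \ref{16}(2a) with $t=4$, so the bound $\ell \geq 5$ suffices throughout both parts of the corollary.
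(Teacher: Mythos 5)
Your proposal is correct and follows essentially the same route as the paper: translate the fiber product into a decomposition $\m=I\oplus J$ via Fact~\ref{fact300117a}, invoke Theorem~\ref{1} (part (2) under the depth-zero hypothesis to isolate case (i), part (1) in general), and conclude with Lemma~\ref{16} plus the depth bound $\depth R\le 1$ from \eqref{eq310117b}. The only cosmetic difference is in part (2), where the paper handles the $\syz_R^5M$ case via Lemma~\ref{16}(1a) with $t=5$ and $\ell+1\ge 6$, while you fold it into Lemma~\ref{16}(2a) with $t=4$; both index checks are valid.
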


\begin{proof}
(1) By Fact~\ref{fact300117a} the ring $R$ has depth $0$.
If $M$ is not $R$-free, then $\pd_R M=\infty$.
Theorem~\ref{1} implies that $\m$ is a direct summand of $\syz_R^3M$ or $\syz_R^4M$.
Lemma \ref{16}(1) now implies that $\pd_R N<\infty$, and $N$ is $R$-free.

(2) By Fact~\ref{fact300117a} the ring $R$ has depth $\le 1$.
Theorem~\ref{1} implies that $\m$ is a direct summand of $\syz_R^3M\oplus\syz_R^4M$ or $\syz_R^5M$.
Assume that $\pd_RM\ge2$. Note that $\ell\ge3+1$ and $\ell+1\ge5+1$. It is observed from (1a) and (2a) in Lemma \ref{16} that $\pd_R N<\infty$.
Hence, $\pd_RN\le 1$.
\end{proof}

An analogous argument to the proof of Corollary \ref{15} yields its Ext version.
Note that this highly refines the Ext vanishing results in~\cite{NSW}.

\begin{cor}\label{15'}
Assume that $R=S\times_kT$ is a fiber product, where $S$ and $T$ are local rings with common residue field $k$ and $S\ne k\ne T$.
Let $M$ and $N$ be $R$-modules.
\begin{enumerate}[\rm(1)]
\item
If $S$ or $T$ has depth $0$ and $\Ext^{\ell}_R(M,N)=0$ for some $\ell\ge4+\max\{1,\depth_R N\}$, then $M$ is $R$-free or $N$ is $R$-injective.
\item
If $\Ext^{\ell}_R(M,N)=\Ext^{\ell+1}_R(M,N)=0$ for some $\ell\ge4+\max\{1,\depth_R N\}$, then $\pd_R M\le 1$ or $\id_R N\le 1$.
\end{enumerate}
\end{cor}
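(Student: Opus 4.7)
The plan is to mirror the proof of Corollary~\ref{15} (the Tor version) almost verbatim, but to appeal to parts (1b) and (2b) of Lemma~\ref{16} in place of (1a) and (2a), and then to invoke the Bass formula to convert finiteness of injective dimension into the numerical bound on $\id_R N$. Since $R=S\times_k T$ is a non-trivial fiber product, Fact~\ref{fact300117a} supplies a decomposition $\m=I\oplus J$ with both summands non-zero, together with the depth equation $\depth R=\min\{\depth S,\depth T,1\}$; in particular $\depth R\le 1$, with equality $\depth R=0$ holding precisely when $S$ or $T$ has depth zero.

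For part~(1), I would assume $M$ is not $R$-free and aim to deduce that $N$ is $R$-injective. Since $\depth R=0$, the Auslander--Buchsbaum formula forces $\pd_R M=\infty$, hence $\pd_R M\ge 2$, so Theorem~\ref{1}(2) rules out cases (ii)--(v) and leaves case (i): $\m$ is a direct summand of $\syz_R^3M$ or $\syz_R^4M$. The hypothesis $\ell\ge 4+\max\{1,\depth_R N\}$ then meets the bound $\ell\ge t+\max\{1,\depth_R N\}$ required in Lemma~\ref{16}(1b) for $t\in\{3,4\}$, yielding $\id_R N<\infty$. The Bass formula now gives $\id_R N=\depth R=0$, so $N$ is $R$-injective.

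For part~(2), I would again assume $\pd_R M\ge 2$ and apply Theorem~\ref{1}(1), which (since $\depth R\le 1$) leaves two possibilities for where $\m$ appears as a direct summand: either inside $\syz_R^3M\oplus\syz_R^4M$, or inside $\syz_R^5M$. In the first case Lemma~\ref{16}(2b) with $t=3$ applies directly, since $\ell\ge 4+\max\{1,\depth_R N\}\ge 3+\max\{1,\depth_R N\}$ and both $\Ext_R^\ell(M,N)$ and $\Ext_R^{\ell+1}(M,N)$ vanish. In the second case one uses the shifted vanishing: $\ell+1\ge 5+\max\{1,\depth_R N\}$, so Lemma~\ref{16}(1b) with $t=5$ applied to the index $\ell+1$ again forces $\id_R N<\infty$. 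In either subcase the Bass formula concludes $\id_R N=\depth R\le 1$.

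The main bookkeeping obstacle is simply matching the syzygy index $t$ produced by Theorem~\ref{1} against the lower bound on $\ell$ demanded by Lemma~\ref{16}; the choice of the constant $4$ in the hypothesis $\ell\ge 4+\max\{1,\depth_R N\}$ is tuned exactly so that $t=3,4$ are absorbed by $\ell$ and $t=5$ is absorbed by $\ell+1$ via the consecutive vanishing. No new homological input beyond Theorem~\ref{1}, Lemma~\ref{16}, Fact~\ref{fact300117a}, and the Bass formula is required.
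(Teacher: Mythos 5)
Your proposal is correct and follows essentially the same route the paper intends: the paper itself only remarks that ``an analogous argument to the proof of Corollary~\ref{15} yields its Ext version,'' and your argument is exactly that analogue, replacing Lemma~\ref{16}(1a),(2a) by (1b),(2b), using Theorem~\ref{1} together with Fact~\ref{fact300117a} in the same way, and with the same index bookkeeping ($t=3,4$ absorbed by $\ell$, $t=5$ by $\ell+1$). Your explicit appeal to the Bass formula to pass from $\id_RN<\infty$ to $\id_RN=\depth R\le 1$ (resp.\ $=0$) is the step the paper leaves implicit, and it is the right one.
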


To get our next applications, we prepare a lemma.

\begin{lem}\label{35}
Let $\xx=x_1,\dots,x_n$ be an $R$-sequence such that $\m/(\xx)$ is decomposable.
Then $n$ is equal to either $\depth R-1$ or $\depth R$.
\end{lem}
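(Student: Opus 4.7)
The proof will be short: we squeeze $n$ from above by regularity of $\xx$ and from below by applying Fact~\ref{fact300117a} to the quotient $R/(\xx)$.

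First I would observe that since $\xx$ is an $R$-sequence of length $n$, one has the standard inequality $n \le \depth R$; this gives the upper bound. For the lower bound, set $S := R/(\xx)$, which is again a commutative noetherian local ring with maximal ideal $\m/(\xx)$. Standard depth theory (modding out a regular sequence drops depth by its length) yields
\[
\depth S \;=\; \depth R - n.
\]

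Next, the key input is that $\m/(\xx)$ is decomposable. Applying Fact~\ref{fact300117a} to $S$, a nontrivial direct sum decomposition $\m/(\xx) = I \oplus J$ exhibits $S$ as a fiber product, and the displayed formula~\eqref{eq310117b} forces
\[
\depth S \;=\; \min\{\depth S/I,\,\depth S/J,\,1\} \;\le\; 1.
\]
Combining this with $\depth S = \depth R - n$ gives $n \ge \depth R - 1$.

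The two bounds together yield $\depth R - 1 \le n \le \depth R$, which is exactly the claim. There is no real obstacle here; the whole argument is just plugging the hypothesis into Fact~\ref{fact300117a} and comparing with the depth formula for regular sequences.
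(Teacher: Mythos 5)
Your proof is correct and is essentially the paper's argument: the paper simply invokes Fact~\ref{fact300117a} to get $\depth R/(\xx)\le 1$ and leaves the bookkeeping implicit, while you spell out the two standard facts ($n\le\depth R$ and $\depth R/(\xx)=\depth R-n$) that turn this bound into $\depth R-1\le n\le\depth R$.
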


\begin{proof}
By Fact~\ref{fact300117a} we have $\depth R/(\xx)\le1$. This shows the assertion.
\end{proof}

The following results are generalizations of Corollaries \ref{15} and \ref{15'} to a local ring with quasi-decomposable maximal ideal.

\begin{cor}\label{tcm}
Let $R$ be a local ring with $\depth R=d$ such that $\m$ is
quasi-decomposable with an $R$-sequence $\xx=x_1,\dots,x_n$.
Let $M$ and $N$ be $R$-modules for which there exists an integer $t\ge\max\{5,n+1\}$ such that $\Tor^R_i(M,N)=0$ for all $t+n\le i\le t+n+d$.
Then $\pd_RM<\infty$ or $\pd_RN<\infty$.
\end{cor}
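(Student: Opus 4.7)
The plan is to proceed by induction on $n$, reducing to the fiber-product case handled by Corollary \ref{15}. Assume throughout that $\pd_R M = \infty$; the goal is to show $\pd_R N < \infty$.

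For the base case $n = 0$, the ideal $\m$ is itself decomposable, so by Fact \ref{fact300117a} the ring $R$ is a non-trivial fiber product $S \times_k T$, and Lemma \ref{35} forces $d \in \{0,1\}$. If $d = 0$, then Fact \ref{fact300117a} gives $\depth S = 0$ or $\depth T = 0$, and the single vanishing $\Tor^R_t(M,N) = 0$ with $t \ge 5$ feeds into Corollary \ref{15}(1) to force $N$ to be $R$-free. If $d = 1$, the two consecutive vanishings $\Tor^R_t(M,N) = \Tor^R_{t+1}(M,N) = 0$ with $t \ge 5$ feed into Corollary \ref{15}(2) to force $\pd_R N \le 1$.

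For the inductive step $n \ge 1$, pass to $R' := R/(x_1)$, whose maximal ideal is quasi-decomposable via $x_2, \ldots, x_n$ (length $n - 1$) with $\depth R' = d - 1$. Set $K := \syz^1_R M$ and $L := \syz^1_R N$; by Lemma \ref{18}, $x_1$ is regular on each. Then $M' := K/x_1 K$ and $N' := L/x_1 L$ are $R'$-modules with $\pd_{R'} M' = \infty$ and $\pd_{R'} N' < \infty \iff \pd_R N < \infty$. Change of rings together with a two-step dimension shift gives
\[
\Tor^{R'}_i(M', N') \cong \Tor^R_i(K, L) \cong \Tor^R_{i+2}(M, N) \qquad (i \ge 1),
\]
so the vanishing hypothesis translates to $\Tor^{R'}_i(M', N') = 0$ for $i \in [t + n - 2,\ t + n + d - 2]$. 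This block contains $[t + (n-1),\ t + (n-1) + (d-1)]$, precisely the range needed to invoke the inductive hypothesis with $t' = t$, $n' = n - 1$, $d' = d - 1$; the assumption $t \ge \max\{5, n+1\}$ implies $t' \ge \max\{5, n'+1\}$. The induction yields $\pd_{R'} N' < \infty$, hence $\pd_R N < \infty$.

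The main obstacle is the index bookkeeping. At each inductive step the double dimension shift moves the $\Tor$ block by $-2$ while both $n$ and $d$ drop by $1$, so after $k$ reductions the block lives in $[t + n - 2k,\ t + n + d - 2k]$. The lower endpoint must remain $\ge 1$ for the dimension-shift isomorphism to be valid, and at the deepest level $k = n$ this requires $t \ge n + 1$ — precisely the standing hypothesis. The bound $t \ge 5$ is what feeds into Corollary \ref{15} at the base case, where it is needed for the Nasseh--Sather-Wagstaff vanishing results to apply.
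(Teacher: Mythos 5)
Your overall architecture is sound, and it is essentially the paper's argument reorganized as an induction on $n$: the paper passes to $\syz_R^nM$ and $\syz_R^nN$ in one step, kills all of $\xx$ on one of the two modules via long exact sequences, applies the change-of-rings lemma \cite[\S 18, Lemma 2(3)]{Mm} once, and then uses Lemma \ref{35} together with Corollary \ref{15} exactly as in your base case. The genuine flaw is the displayed isomorphism $\Tor^{R'}_i(M',N')\cong\Tor^R_i(K,L)$. The change-of-rings isomorphism modulo a nonzerodivisor only applies when one of the two arguments is reduced: it gives $\Tor^{R'}_i(K/x_1K,B)\cong\Tor^R_i(K,B)$ for an $R'$-module $B$, with $x_1$ regular on $R$ and on $K$. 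Reducing both modules changes Tor in general: for $R=k[[x,y]]/(y^2)$, $x_1=x$ and $K=L=R/(y)$ one has $\Tor^R_i(K,L)\cong k[[x]]$ for all $i$, while $\Tor^{R'}_i(K/x_1K,L/x_1L)\cong k$. So the claimed identification, and with it the lower endpoint $t+n-2$ of your vanishing block, is not justified as stated.

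The repair is short and delivers exactly the range you actually use. From $\Tor^R_i(K,L)\cong\Tor^R_{i+2}(M,N)=0$ for $t+n-2\le i\le t+n+d-2$ (valid since $t\ge 5$) and the long exact sequence of $0\to L\xrightarrow{x_1}L\to N'\to 0$, you get $\Tor^R_i(K,N')=0$ for $t+n-1\le i\le t+n+d-2$; one index is lost at the bottom. Now the legitimate change of rings, with $x_1$ regular on $R$ and on $K$ and with $x_1N'=0$, gives $\Tor^{R'}_i(M',N')\cong\Tor^R_i(K,N')$, so the vanishing holds on the interval $[t+n-1,\ t+n+d-2]=[t'+n',\ t'+n'+d']$, which is precisely the interval required by the inductive hypothesis (and is the sub-interval you yourself observed suffices). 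With this substitution the rest goes through: $\pd_{R'}M'=\pd_RK=\infty$ and $\pd_{R'}N'=\pd_RL$ by \cite[Lemma 1.3.5]{BH}, the quotient $R'$ has depth $d-1$ and quasi-decomposable maximal ideal with the sequence $x_2,\dots,x_n$, and the base case $n=0$ is handled, as you do, by Lemma \ref{35}, Fact \ref{fact300117a} and Corollary \ref{15}. In effect the corrected step (kill $x_1$ on one module by the long exact sequence, then change rings) is exactly the paper's maneuver performed one element at a time, so the two proofs differ only in packaging.
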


\begin{proof}
Note that $t-n>0$. By Lemma~\ref{18} we see that $\xx$ is a regular sequence on both $X:=\syz^nM$ and $Y:=\syz^nN$. We also have $\Tor_i^R(X,Y)=0$ for all $t-n\le i\le t-n+d$.
Using the long exact sequences arising from the short exact sequences
$$
0 \to Y/(x_1,\dots,x_{j-1})Y \xrightarrow{x_j} Y/(x_1,\dots,x_{j-1})Y \to  Y/(x_1,\dots,x_j)Y \to 0
$$
for $1\le j\le n$, we observe that $\Tor_i^R(X,Y/\xx Y)=0$ for all $t\le i\le t-n+d$.
Applying \cite[\S18, Lemma 2(3)]{Mm}, we obtain $\Tor_i^{R/(\xx)}(X/\xx X,Y/\xx Y)=0$ for all $t\le i\le t-n+d$.

According to Lemma \ref{35}, the integer $n$ is either $d$ or $d-1$.
If $n=d$, then $\Tor_t^{R/(\xx)}(X/\xx X,Y/\xx Y)=0$ and $R/(\xx)$ has depth zero.
Fact \ref{fact300117a} and Corollary \ref{15}(1) imply that either $X/\xx X$ or $Y/\xx Y$ is $R/(\xx)$-free.
If $n=d-1$, then we have $\Tor_t^{R/(\xx)}(X/\xx X,Y/\xx Y)=\Tor_{t+1}^{R/(\xx)}(X/\xx X,Y/\xx Y)=0$.
Fact \ref{fact300117a} and Corollary \ref{15}(2) imply that $\pd_{R/(\xx)}(X/\xx X)\le 1$ or $\pd_{R/(\xx)}(Y/\xx Y)\le 1$.
Hence, $\pd_RX<\infty$ or $\pd_RY<\infty$ by \cite[Lemma 1.3.5]{BH}. Therefore, $\pd_RM<\infty$ or $\pd_RN<\infty$.
\end{proof}

\begin{cor}\label{ecm}
Let $R$ be a $d$-dimensional Cohen-Macaulay local ring such that $\m$ is
quasi-decomposable with an $R$-sequence $\xx=x_1,\dots,x_n$.
Let $M$ and $N$ be $R$-modules, and set $s:=d-\depth M\,(\ge0)$.
Suppose that there exists an integer $t\ge5$ such that $\Ext_R^i(M,N)=0$ for all $t+s\le i\le t+s+d$.
Then $\pd_RM<\infty$ or $\id_RN<\infty$.
\end{cor}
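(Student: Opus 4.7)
\begin{para}[\emph{Proof plan}]
The plan is to mirror the strategy of Corollary~\ref{tcm}: pass to a maximal Cohen--Macaulay representative of $M$, reduce modulo the regular sequence $\xx$, and invoke Theorem~\ref{1} in $R/(\xx)$. However, unlike the Tor case, there is no clean Ext change-of-rings formula unless $\xx$ happens to be $N$-regular, which is not assumed. The trick is to use Lemma~\ref{ooor} so that all of the syzygy bookkeeping can be carried out over $R$ itself.

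Set $X:=\syz_R^s M$, which is maximal Cohen--Macaulay, so by Lemma~\ref{18} the sequence $\xx$ is $X$-regular. Write $\bar R:=R/(\xx)$ and $\bar X:=X/\xx X$, and assume $\pd_R M=\infty$; then \cite[Lemma 1.3.5]{BH} gives $\pd_{\bar R}\bar X=\infty$. The shift $\Ext_R^{i+s}(M,N)\cong\Ext_R^i(X,N)$, valid for $i\geq 1$, turns the hypothesis into $\Ext_R^i(X,N)=0$ for $i\in[t,t+d]$. Applying $\Hom_R(-,N)$ to the short exact sequences
\[
0\to X/(x_1,\dots,x_{j-1})X\xrightarrow{x_j}X/(x_1,\dots,x_{j-1})X\to X/(x_1,\dots,x_j)X\to 0
\]
and chasing the long exact sequences shows that each reduction step trims the vanishing range by one on the lower end, so that $\Ext_R^i(\bar X,N)=0$ for $i\in[t+n,t+d]$; by Lemma~\ref{35} this range has length $d-n+1\in\{1,2\}$.

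Next apply Theorem~\ref{1} to the pair $(\bar R,\bar X)$: the ideal $\bar\m=\m/(\xx)$ is a direct summand of $\bigoplus_{j\in J}\syz_{\bar R}^j\bar X$ for some $J\subseteq\{3,4,5\}$ with $|J|\leq 2$, and $J$ is forced to be a singleton when $n=d$ (by Theorem~\ref{1}(2), since in that case $\bar R$ has depth $0$). Because $k$ and $\bar X$ are both annihilated by $\xx$, Lemma~\ref{ooor} (with $u=n$) yields $\syz_R^n\bar\m\cong\syz_R^{n+1}k\oplus R^{\oplus v}$ and $\syz_R^n\syz_{\bar R}^j\bar X\cong\syz_R^{n+j}\bar X\oplus R^{\oplus v_j}$, so taking the $n$-th $R$-syzygy of the summand relation exhibits $\syz_R^{n+1}k$ as a direct summand, modulo a free summand, of $\bigoplus_{j\in J}\syz_R^{n+j}\bar X$.

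Finally, applying $\Ext_R^i(-,N)$ for an integer $i\geq 1$ makes $\Ext_R^{i+n+1}(k,N)$ a direct summand of $\bigoplus_{j\in J}\Ext_R^{i+n+j}(\bar X,N)$, so it suffices to pick $i\geq 1$ with $i+n+j\in[t+n,t+d]$ for every $j\in J$. A routine case check over the four possible shapes of $J$ shows such an $i$ exists whenever $t\geq 5$, and that the resulting index $m:=i+n+1$ is at least $d+1$, hence strictly greater than $\depth_R N\leq d$. The theorem of Fossum--Foxby--Griffith--Reiten invoked in Lemma~\ref{16}(1b) then forces $\id_R N<\infty$. The delicate step is this final matching: since Theorem~\ref{1} produces a summand relation involving at most two consecutive syzygies while the Ext vanishing range has length only $1$ or $2$, the fit is tight, and in the borderline possibility $n=d-1$, $J=\{5\}$, $t=5$ one must check that $i=1$ (rather than $i=0$) still places every $i+n+j$ in the required range.
\end{para}
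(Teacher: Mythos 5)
Your proof is correct, but it takes a genuinely different route from the paper's. The paper's argument never lifts the conclusion of Theorem~\ref{1} back to $R$: instead it assumes $R$ complete (to get a canonical module), replaces $N$ by a maximal Cohen--Macaulay approximation $0\to Y\to X\to N\to 0$ in the sense of Auslander--Buchweitz \cite{AB} (with $\id_RY<\infty$, so the Ext-vanishing hypothesis transfers from $N$ to $X$), reduces \emph{both} arguments modulo $\xx$ using \cite[\S18, Lemma 2]{Mm}, and then quotes the fiber-product vanishing result Corollary~\ref{15'} over $R/(\xx)$, finishing with the change-of-rings statements in \cite{BH} --- exactly parallel to the proof of Corollary~\ref{tcm}. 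You instead descend only the first argument: after trimming the vanishing range to $\Ext_R^i(\bar X,N)=0$ for $i\in[t+n,t+d]$, you apply Theorem~\ref{1} over $R/(\xx)$, use Lemma~\ref{ooor} to turn the summand relation for $\m/(\xx)$ into one exhibiting $\syz_R^{n+1}k$ (up to free summands) inside $R$-syzygies of $\bar X$, and conclude with Fossum--Foxby--Griffith--Reiten \cite{FFGR} directly over $R$. What your route buys is the elimination of the canonical module, the completion step, and the MCM approximation altogether; what it costs is the delicate index matching at the end, which you handled correctly --- in particular the essential observation that when $n=d$ the ring $R/(\xx)$ has depth $0$, so Theorem~\ref{1}(2) rules out cases (ii)--(v) and the single available vanishing degree suffices, and the check that $i\ge1$ can always be arranged with $m=i+n+1\ge d+1>\depth_RN$. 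One small inaccuracy: Lemma~\ref{18} does not by itself give that $\xx$ is regular on $X=\syz_R^sM$ (it concerns $\syz_R^nM$, and $s$ may be smaller than $n$); the correct justification is the standard fact that an $R$-regular sequence is regular on any maximal Cohen--Macaulay module, which is also what the paper uses tacitly in the proofs of Lemma~\ref{34} and Corollary~\ref{ecm}, so this is a miscitation rather than a gap.
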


\begin{proof}
Without loss of generality we may assume that $R$ is complete, so that $R$ admits a canonical module.
Let $L:=\syz_R^sM$ and note then that $L$ is maximal Cohen-Macaulay.
We now have $\Ext_R^i(L,N)=0$ for all $t\le i\le t+d$.
By virtue of \cite[Theorem 1.1]{AB}, we get an exact sequence $0 \to Y \to X \to N \to 0$
in which $X$ is maximal Cohen-Macaulay and $Y$ has finite injective dimension.
Since $\Ext_R^m(L,Y)=0$ for all $m>0$, we get $\Ext_R^i(L,X)=0$ for all $t\le i\le t+d$.
Using the long exact sequences arising from the short exact sequences
$$
0 \to L/(x_1,\dots,x_{j-1})L \xrightarrow{x_j} L/(x_1,\dots,x_{j-1})L \to  L/(x_1,\dots,x_j)L \to 0
$$
for $1\le j\le n$, we see that $\Ext_R^i(L/\xx L,X)=0$ for all $t+n\le i\le t+d$.
It follows from \cite[\S18, Lemma 2(i)]{Mm} that $\Ext_{R/(\xx)}^i(L/\xx L,X/\xx X)=0$ for all $t\le i\le t+d-n$.

Lemma \ref{35} says that $n=d$ or $n=d-1$, and note that
$$
5=4+1=4+\max\{1,\depth_{R/(\xx)}X/\xx X\}.
$$
When $n=d$, the ring $R/(\xx)$ has depth zero and we have $\Ext_{R/(\xx)}^t(L/\xx L,X/\xx X)=0$.
Combining Fact \ref{fact300117a} and Corollary \ref{15'}(1), we conclude that $L/\xx L$ is $R/(\xx)$-free or $X/\xx X$ is $R/(\xx)$-injective.
When $n=d-1$, we have $\Ext_{R/(\xx)}^t(L/\xx L,X/\xx X)=\Ext_{R/(\xx)}^{t+1}(L/\xx L,X/\xx X)=0$.
In this case, Fact \ref{fact300117a} and Corollary \ref{15'}(2) imply that $\pd_{R/(\xx)}(L/\xx L)\le1$ or $\id_{R/(\xx)}(X/\xx X)\le1$.
Therefore, $\pd_RL<\infty$ or $\id_RX<\infty$; see \cite[Lemma 1.3.5 and Corollary 3.1.15]{BH}. Thus, $\pd_RM<\infty$ or $\id_RN<\infty$.
\end{proof}

\begin{disc}\label{disc04042018a}
We do not know how to get a complete generalization of Corollary~\ref{15'} to the
higher-dimensional case (with no Cohen-Macaulay assumption) like what we did in Corollary~\ref{tcm} as a generalization of Corollary~\ref{15}.
However, if we assume infinitely many Ext vanishings, then we can prove the following result using Corollary~\ref{tcm} and~\cite[Proposition 6.5]{AINSW} (see also the proof of~\cite[Proposition 4.1]{NSW}):
\end{disc}

\begin{cor}\label{ecm1}
Let $R$ be a local ring with quasi-decomposable maximal ideal $\m$.
Let $M$ and $N$ be $R$-modules such that $\Ext_R^i(M,N)=0$ for all $i\gg 0$.
Then $\pd_RM<\infty$ or $\id_RN<\infty$.
\end{cor}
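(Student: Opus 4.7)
The plan is to reduce Corollary~\ref{ecm1} to Corollary~\ref{tcm} by converting Ext-vanishing into Tor-vanishing through Matlis duality, in parallel with the argument for \cite[Proposition 4.1]{NSW}. First I would pass to the $\m$-adic completion $\widehat R$: quasi-decomposability of $\m$ is preserved under completion, $\Ext_R^i(M,N)$ completes to $\Ext_{\widehat R}^i(\widehat M,\widehat N)$ for finitely generated $M,N$, and finiteness of $\pd$ and of $\id$ both ascend and descend faithfully flatly. So we may assume $R$ is complete, and set $(-)^{\vee}:=\Hom_R(-,E_R(k))$ for Matlis duality. The standard isomorphism
\[
\Tor_i^R(M,N^{\vee})\;\cong\;\Ext_R^i(M,N)^{\vee}
\]
transports the hypothesis into $\Tor_i^R(M,N^{\vee})=0$ for all $i\gg 0$.

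The wrinkle is that $N^{\vee}$ is Artinian rather than finitely generated, so Corollary~\ref{tcm} does not apply on the nose. This is the gap plugged by \cite[Proposition 6.5]{AINSW}, and the maneuver here is the same one used in the proof of \cite[Proposition 4.1]{NSW}: that proposition converts asymptotic Tor-vanishing against an Artinian module into Tor-vanishing against a suitable finitely generated replacement $N'$ across a sufficiently long window of consecutive degrees. By Lemma~\ref{35} the length $n$ of the given $R$-sequence $\xx$ is either $\depth R$ or $\depth R-1$, so the window required by Corollary~\ref{tcm} is a block of $d+1$ consecutive Tor vanishings starting in some degree $i\ge \max\{5,n+1\}+n$; it is precisely this block that must be extracted from \cite[Proposition 6.5]{AINSW}.

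Once such a window is in place, Corollary~\ref{tcm} applied to the pair $(M,N')$ yields $\pd_R M<\infty$ or $\pd_R N'<\infty$; unwinding the latter through Matlis duality translates finite projective dimension of the finitely generated surrogate for $N^{\vee}$ into $\id_R N<\infty$, completing the proof. I expect the main obstacle to be exactly this middle step: correctly invoking \cite[Proposition 6.5]{AINSW} so as to produce a long enough finite window of Tor vanishings between two finitely generated modules from the bare asymptotic Ext-vanishing hypothesis, while keeping the numerics consistent with the threshold $\max\{5,n+1\}$ and the length $d+1$ demanded by Corollary~\ref{tcm}. Everything else is bookkeeping: ascent/descent of homological dimensions along the flat map $R\to \widehat R$, and the two applications of Matlis duality at the start and end.
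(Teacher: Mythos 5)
Your route is essentially the paper's own: for Corollary~\ref{ecm1} the paper offers no detailed argument, only the remark that it follows from Corollary~\ref{tcm} together with \cite[Proposition 6.5]{AINSW}, arguing as in the proof of \cite[Proposition 4.1]{NSW}, and your reduction (complete, Matlis-dualize, pass to a finitely generated surrogate, then invoke Corollary~\ref{tcm}) is exactly that recipe. The one step you flag as uncertain---producing the finitely generated replacement with a long enough window of Tor vanishing---is precisely the content the paper delegates to \cite[Proposition 6.5]{AINSW}, so your proposal matches the paper's level of detail.
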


\section*{Acknowledgments}
We thank the referee for reading the paper and for helpful suggestions.


\begin{thebibliography}{10}

\bibitem{AB}
M. Auslander and R.-O. Buchweitz, {\it The homological theory of maximal Cohen-Macaulay approximations}, M\'{e}m. Soc. Math. France (N.S.), No. 38 (1989), 5--37.

\bibitem{AINSW}
L.~L. Avramov, S.~B. Iyengar, S.~Nasseh, and S.~Sather-Wagstaff, \emph{Persistence of homology over commutative noetherian rings}, in
  preparation.

\bibitem{BH}
W.~Bruns and J.~Herzog, {\it Cohen-Macaulay rings}, revised edition, Cambridge Studies in Advanced Mathematics, 39. Cambridge University Press, Cambridge, 1998.

\bibitem{B}
R.-O. Buchweitz, \emph{Maximal Cohen-Macaulay modules and Tate-cohomology over Gorenstein rings}, Unpublished paper written in 1986, available at \texttt{http://hdl.handle.net/1807/16682}.

\bibitem{christensen:gmirlr}
L.~W. Christensen, J.\ Striuli, and O.\ Veliche, \emph{Growth in the minimal
  injective resolution of a local ring}, J. Lond. Math. Soc. (2) \textbf{81}
  (2010), no.~1, 24--44.

\bibitem{crspd}
H. Dao and R. Takahashi, {\it Classification of resolving subcategories and grade consistent functions}, Int. Math. Res. Not. IMRN, 2015, no. 1, 119--149.

\bibitem{dress}
A.~Dress and H.~Kr{\"a}mer, \emph{Bettireihen von {F}aserprodukten lokaler
  {R}inge}, Math. Ann. \textbf{215} (1975), 79--82.

\bibitem{FFGR}
R.~Fossum, H.-B.~Foxby, P.~Griffith, and I.~Reiten, \emph{Minimal injective resolutions with applications to dualizing modules and Gorenstein modules}, Inst. Hautes \'{E}tudes Sci. Publ. Math. \textbf{45} (1975), 193--215.

\bibitem{almgor}
S. Goto, R. Takahashi, and N. Taniguchi, {\it Almost Gorenstein rings - towards a theory of higher dimension}, J. Pure Appl. Algebra {\bf 219} (2015), no. 7, 2666--2712.


\bibitem{Huneke-Watanabe}
C.~Huneke and K.-I.~Watanabe, {\it Upper bound of multiplicity of F-pure rings},
Proc. Amer. Math. Soc. \textbf{143} (2015), no.~12, 5021--5026.

\bibitem{KS}
H. Krause and G. Stevenson, {\it A note on thick subcategories of stable derived categories}, Nagoya Math. J. {\bf 212} (2013), 87--96.

\bibitem{sg}
H. Matsui and R. Takahashi, {\it Singularity categories and singular equivalences for resolving subcategories}, Math. Z. {\bf 285} (2017), no. 1-2, 251--286.

\bibitem{Mm}
H. Matsumura, {\it Commutative ring theory}, Translated from the Japanese by M. Reid, Second edition, Cambridge Studies in Advanced Mathematics, 8, Cambridge University Press, Cambridge, 1989.

\bibitem{Mo}
N. Matsuoka, {\it The defining ideal of an almost Gorenstein numerical semigroup ring}, in preparation.

\bibitem{moore}
W.~F. Moore, \emph{Cohomology over fiber products of local rings}, J. Algebra
  \textbf{321} (2009), no.~3, 758--773.

\bibitem{NSW}
S.~Nasseh and S.~Sather-Wagstaff, \emph{Vanishing of Ext and Tor over fiber products},
Proc. Amer. Math. Soc., \textbf{145} (2017), no.~11, 4661--4674..

\bibitem{ogoma}
T.~Ogoma, \emph{Existence of dualizing complexes}, J. Math. Kyoto Univ., \textbf{24} (1984), no.~1, 27--48.

\bibitem{O}
D. O. Orlov, \emph{Triangulated categories of singularities and D-branes in Landau-Ginzburg models}, Proc. Steklov Inst. Math. \textbf{246} (2004), no.~3, 227--248.


\bibitem{S}
H. Schoutens, {\it Projective dimension and the singular locus}, Comm. Algebra {\bf 31} (2003), no. 1, 217--239.


\bibitem{stcm}
R. Takahashi, {\it Classifying thick subcategories of the stable category of Cohen-Macaulay modules}, Adv. Math. {\bf 225} (2010), no. 4, 2076--2116.

\bibitem{crs}
R. Takahashi, {\it Classifying resolving subcategories over a Cohen-Macaulay local ring}, Math. Z. {\bf 273} (2013), no. 1-2, 569--587.

\bibitem{thd}
R. Takahashi, {\it Thick subcategories over Gorenstein local rings that are locally hypersurfaces on the punctured spectra}, J. Math. Soc. Japan {\bf 65} (2013), no. 2, 357--374.
\end{thebibliography}

\providecommand{\bysame}{\leavevmode\hbox to3em{\hrulefill}\thinspace}
\providecommand{\MR}{\relax\ifhmode\unskip\space\fi MR }
\providecommand{\MRhref}[2]{%
  \href{http://www.ams.org/mathscinet-getitem?mr=#1}{#2}
}
\providecommand{\href}[2]{#2}

\end{document}